\documentclass[letterpaper, 10pt, journal, onecolumn, final]{IEEEtran}

\IEEEoverridecommandlockouts                              %

\usepackage{cite}
\usepackage{amsmath,amssymb,amsfonts}
\usepackage{graphicx}
\usepackage{algorithm,algorithmicx,algpseudocode}
\usepackage{tcolorbox}
\usepackage{booktabs} %
\usepackage{subcaption}
\usepackage{caption}
\usepackage{soul}

\title{Suboptimal and Reduced-Order MPC\\via Timescale Separation}

\author{Stefano Di Gregorio, Guido Carnevale, Giuseppe Notarstefano%
\thanks{Work partially funded by FISA-2023-00210 project APACHE - CUP J53C25000520001, by the European Union - Next Generation EU - under the National Recovery and Resilience Plan (NRRP), Mission 4, Component 2, Investment 3.3. CUP J33C24001490009 and by IMA S.p.A. 
The corresponding author is S. Di Gregorio.}
\thanks{The authors are with the Department of Electrical, Electronic and Information Engineering, University of Bologna, 40136, Bologna, Italy (e-mail: \{stefano.digregorio, guido.carnevale, giuseppe.notarstefano\}@unibo.it).}%
}%

\newcommand{\du}{^}
\newcommand{\ud}{_}

\newcommand{\oprocend}{\hfill\hbox{\rule{1.25ex}{1.25ex}}}

\newcommand{\T}{^\top}
\newcommand{\map}[3]{#1: #2 \rightarrow #3}

\newtheorem{remark}{Remark}

\newtheorem{assumption}{Assumption}

\newtheorem{lemma}{Lemma}
\newtheorem{theorem}{Theorem}

\newtheorem{proof}{Proof}

\newcommand{\xcont}{x\ct}
\newcommand{\xcontdot}{\dot{x}\ct}

\newcommand{\xicont}{\xi\ct}
\newcommand{\xicontdot}{\dot{\xi}\ct}
\newcommand{\ucont}{u\ct}

\newcommand{\cA}{\mathcal{A}}

\newcommand{\cG}{\mathcal{G}}

\newcommand{\cL}{\mathcal{L}}

\newcommand{\cQ}{\mathcal{Q}}
\newcommand{\cR}{\mathcal{R}}

\newcommand{\cU}{\mathcal{U}}

\newcommand{\cX}{\mathcal{X}}

\newcommand{\R}{\mathbb{R}}
\newcommand{\N}{\mathbb{N}}

\newcommand{\norm}[1]{\left\| #1 \right\|}
\newcommand{\snorm}[1]{\| #1 \|}

\newcommand{\iter}{{t}}
\newcommand{\iterp}{{\iter+1}}

\newcommand{\itermpc}{k}

\newcommand{\tti}{\tau}

\newcommand{\x}{x}
\newcommand{\z}{z}
\newcommand{\uu}{u}

\newcommand{\pr}{\delta}
\newcommand{\prm}{\pr_{\text{\tiny MPC}}}
\newcommand{\prct}{\epsilon}

\newcommand{\xtp}{\x\ud{\iterp}}
\newcommand{\xt}{\x\ud{\iter}}

\newcommand{\xitp}{\xi\ud{\iterp}}
\newcommand{\xit}{\xi\ud{\iter}}

\newcommand{\ztp}{\z\ud{\iterp}}
\newcommand{\zt}{\z\ud{\iter}}

\newcommand{\slow}{f}
\newcommand{\fastsymbol}{g}
\newcommand{\fast}{\fastsymbol\du{\prct}}
\newcommand{\mpc}{\mathcal{A}}

\newcommand{\slowc}{\slow\ct} 
\newcommand{\fastc}{\fastsymbol\ct}

\newcommand{\ct}{\ud{\text{\tiny c}}}

\newcommand{\Fast}{G}

\newcommand{\ut}{\uu\ud{\iter}}

\newcommand{\cost}{\ell}
\newcommand{\costf}{\ell_{\hor}}

\newcommand{\hor}{T}

\newcommand{\red}{\slow\du{\text{\tiny R}}}
\newcommand{\redc}{\red\ct}

\newcommand{\xieq}{\xi\ud{\text{\tiny eq}}}

\newcommand{\zstar}{\z\ud{\star}}

\newcommand{\scale}{.975}

\newcommand{\lip}{L}
\newcommand{\lipeq}{L_{\xi}}

\newcommand{\txi}{\tilde{\xi}}
\newcommand{\txit}{\txi\ud\iter}
\newcommand{\txitp}{\txi\ud\iterp}

\newcommand{\tz}{\tilde{z}}
\newcommand{\tzt}{\tz\ud\iter}
\newcommand{\tztp}{\tz\ud\iterp}

\DeclareMathOperator{\col}{\mathrm{col}}
\DeclareMathOperator{\diag}{\mathrm{diag}}

\newcommand{\tfast}{\tilde{\fastsymbol}\du{\prct}}
\newcommand{\tmpc}{\tilde{\mpc}}

\newcommand{\w}{w}
\newcommand{\wt}{\w\ud\iter}

\newcommand{\y}{y}
\newcommand{\yt}{\y\ud\iter}
\newcommand{\ytp}{\y\ud\iterp}

\newcommand{\ty}{\tilde{\y}}
\newcommand{\tyt}{\ty\ud\iter}
\newcommand{\tytp}{\ty\ud\iterp}

\newcommand{\xstar}{\x\ud{\star}}
\newcommand{\ustar}{\uu\ud{\star}}

\newcommand{\psit}{\psi\ud\iter}
\newcommand{\psitp}{\psi\ud\iterp}

\newcommand{\eq}{h}

\newcommand{\gammam}{\gamma_{\text{max}}}

\newcommand{\feas}{\cX_{\hor}}

\newcommand{\domX}{\mathbf{X}}
\newcommand{\domXi}{\boldsymbol{\Xi}}
\newcommand{\domU}{\mathbf{U}}
\newcommand{\domZ}{\mathbf{Z}}
\newcommand{\domY}{\mathbf{Y}}

\newcommand{\dimz}{n_z}

\newcommand{\tint}{s}

\newcommand{\dt}{\sigma}

\newcommand{\redd}{F_{\text{\tiny MPC}}^{\text{\tiny R}}}

\newcommand{\ratio}{r}

\newcommand{\flow}{\phi}

\newcommand{\bprsi}{\bar{\pr}_1}

\def\algo/{{SMART-MPC}}
\def\algoCAPS/{{SuboptiMAl and Reduced-order Tunable MPC}}

\begin{document}

\maketitle
\thispagestyle{empty}
\pagestyle{empty}

\begin{abstract}     
        In this paper, we propose a generalized framework for the
        design and analysis of suboptimal and
        reduced-order nonlinear Model Predictive Control (MPC) architectures.
        The proposed framework manages real-time operation of MPC
        schemes by (i) computing the control action suboptimally, i.e., by
        running a generic optimal
        control algorithm for a finite number of iterations, and (ii) relying on a reduced-order model
        that neglects part of the plant dynamics (accounting for, e.g.,
        unmodeled dynamics or a low-level compensator).
        To rigorously handle the interplay between optimization error
        and model mismatch, we treat the sampling time as a tunable design parameter. 
        We analyze the resulting closed-loop system,
        comprising the full-order physical plant interconnected with
        the iterative optimization algorithm (treated as a dynamical
        system), by leveraging tools from timescale separation.
        We prove that
        operating at a sufficiently fast sampling rate ensures that the closed-loop
        system maintains recursive feasibility and achieves an exponentially stable equilibrium point.
        The effectiveness of the proposed framework is validated on an underactuated two-link robotic arm through virtual experiments in the high-fidelity MuJoCo physics engine.
    \end{abstract}

\section{Introduction}
\label{sec:introduction}

Model Predictive Control (MPC) has received considerable attention over the last decades and is currently a well-established control technique (see, e.g.,~\cite{bemporad2006model,kouvaritakis2016model,rawlings2020model, schwenzer2021review} for a comprehensive overview of its theoretical and practical aspects). 
Despite the growth in available computational power, the complexity of solving optimal control problems in real-time remains a critical challenge, particularly for high-dimensional or highly nonlinear models.
Thus, mitigating this limitation has become a central focus of current MPC research. 
In this paper, we address this challenge by framing two widely adopted strategies in practical applications within a rigorous methodological perspective: (i) model order reduction and (ii) suboptimal approaches.
Accordingly, we organize the literature review into two corresponding main parts.
\\
The core principle of model-order reduction techniques is to employ simplified yet effective models capturing key features of more complex ones (such as those arising in hierarchical and/or interconnected architectures). 
In this spirit, the work~\cite{rosolia2020multi} proposes a multi-rate architecture with a low-level component ensuring safety by acting on the full-order model, while a high-level MPC scheme optimizes performance on a reduced-order one.
The authors of~\cite{csomay2022multi} design a multi-rate scheme with a high-level component planning a continuous reference trajectory using a linearized MPC approach, a low-level component based on control Lyapunov functions applied to the full-order model, and Bézier curves to connect the two layers.
The paper~\cite{rosolia2022unified} generalizes the hierarchical approach by proposing an architecture combining a high-level Markov Decision Process that iteratively updates the constraint and cost functions of a mid-level MPC scheme providing setpoints to a low-level controller.
The work~\cite{picasso2010mpc} presents a two-layer MPC scheme designed by following a robust control approach.
Similarly, \cite{zhang2022dual} proposes a dual-level MPC scheme based on two regulators operating at different frequencies.
In the process control domain, \cite{chen2011model} develops a composite control framework based on multirate sampling, combining a fast feedback controller for the stabilization of the fast dynamics with a slow MPC scheme managing the slow subsystem.
This paradigm, widely adopted in robotic applications, has been formalized in the tutorial work~\cite{matni2024towards}.
In~\cite{loehning2014model}, the authors establish closed-loop system stability despite the use of a reduced model in the MPC optimizer.
A robust, reduced-order MPC scheme is presented in~\cite{lorenzetti2019reduced}, where stability and constraint satisfaction are proven while accounting for model reduction errors, whose bounds can be computed, e.g., as in~\cite{lorenzetti2020error}.
In~\cite{kartmann2024certified}, Galerkin projection methods are employed to operate on a low-dimensional surrogate model.
The authors of~\cite{wang2022tube} present a robust MPC framework for two-time-scale linear systems, where only a reduced-order model neglecting the fast dynamics is used in the MPC optimizer.
Recently, a novel approach has been proposed to extract low-dimensional control-oriented models from data using Spectral Submanifold Reduction techniques~\cite{alora2023data}.
Other approaches focus on the discretization of the prediction horizon to reduce complexity. 
For instance, \cite{tan2016model} reduces decision variables in coupled multi-scale dynamics by employing a non-uniformly spaced horizon,
while~\cite{schroth2025multi} leverages exponential decay of sensitivity to progressively reduce model complexity along the horizon
by capitalizing on the diminishing impact of distant modeling inaccuracies on the current control action.
\\
Complementary to model reduction, suboptimal MPC has emerged as a prominent strategy to alleviate the computational burden.
This paradigm consists of computing only an approximate solution to the optimal control problem at each sampling time, rather than solving it to optimality (see, e.g.,~\cite{diehl2005real,gros2020linear}).
Foundational studies establishing the theoretical viability of this approach include \cite{scokaert2002suboptimal,graichen2010stability,rubagotti2014stabilizing}.
In particular, the work~\cite{scokaert2002suboptimal} shows that stability can be guaranteed with suboptimal schemes, provided that a feasible initial solution is available.
In the field of continuous-time MPC, authors in~\cite{graichen2010stability} establish a lower bound on the number of optimization steps performed at each sampling instant to achieve incremental improvement. %
In~\cite{rubagotti2014stabilizing}, stability guarantees are provided for MPC schemes applied to linear systems, in which, at each iteration, the associated quadratic program is neither solved to optimality nor satisfies the inequality constraints.
In~\cite{zanelli2021lyapunov}, a suboptimal MPC scheme is proposed with theoretical guarantees on closed-loop stability obtained by deriving an upper bound on the sampling time.
The authors of~\cite{karapetyan2023finite} study the finite-time behavior of suboptimal linear MPC and provide insights into the trade-offs between computational efficiency and control performance.
This work is extended in~\cite{karapetyan2025closed} to nonlinear systems.
More recently,~\cite{chen2025sampled} establishes the existence of a sampling-time bound under which the closed-loop system achieves exponential stability.
In~\cite{preuster2026optimization,preuster2026coupling}, an
alternative approach is explored by using a port-Hamiltonian framework
to analyze the stability of the closed-loop system resulting from
suboptimal MPC schemes.

The main contribution of this paper is the development of
SuboptiMAl and Reduced-order Tunable MPC (SMART-MPC), a
generalized framework to design and analyze MPC architectures that simultaneously
leverage suboptimal optimization and model-order
reduction. %
Specifically, we consider continuous-time nonlinear systems described by the feedback interconnection of a primary target dynamics (whose state enters the cost function of the underlying optimal control problem), and an auxiliary extra dynamics (representing unmodeled dynamics and/or a low-level compensator), whose time scale can be tuned by adjusting a suitable parameter.
Within this framework, we appropriately design the sampling time (see, e.g.,~\cite{zanelli2021lyapunov,chen2025sampled}) of the underlying continuous-time plant so as to enable the use of an MPC scheme that (i) performs only a finite number of iterations of a generic optimization algorithm between two sampling instants,  and (ii) relies only on a reduced-order model of the plant, obtained by constraining the extra dynamics to an associated equilibrium manifold.
By using a timescale-separation approach, we show that the closed-loop system resulting from the interconnection of the suboptimally solved reduced-order MPC problem with the full-order plant enjoys recursive feasibility and has an exponentially stable equilibrium point.
At this equilibrium, (i) the target dynamics reaches its desired setpoint, (ii) the optimizer state is such that the corresponding input estimate coincides with the optimal input sequence, and (iii) the extra dynamics lies on its equilibrium manifold.
Thus, we leverage timescale separation not only to address the distinct time scales naturally arising in many physical systems (see, e.g., the survey~\cite{abdelgalil2023multi}), but also to capture the interaction between the algorithmic component and the plant, as recently explored in the context of feedback optimization (see, e.g.,~\cite{colombino2019online,hauswirth2020timescale,belgioioso2024online,carnevale2024nonconvex}).
\\
A preliminary short version of this work appeared in~\cite{di2025nonlinear}. 
Therein, the plant is modeled as a discrete-time system and the sampling time is not explicitly analyzed.
By contrast, the present paper provides rigorous proofs starting directly from the continuous-time dynamics and explicitly accounts for the role of the sampling time. 
Furthermore, this work introduces a formal guarantee of recursive feasibility and validates the proposed approach through high-fidelity virtual experiments on a more challenging robotic setup. 

The paper is organized as follows. 
Section~\ref{sec:problem_formulation} describes the problem setting.
Section~\ref{sec:algo} details the proposed \algo/ algorithm and states the stability properties of the closed-loop system.
The theoretical analysis is provided in Section~\ref{sec:theoretical_analysis}.
Finally, Section~\ref{sec:simulations} validates the theoretical results through virtual experiments.

\vspace{-.06cm}

\paragraph*{Notation} 

$\R$ and $\N$ denote the sets of real and natural numbers, while $\R\ud{+}$ is the set of non-negative real numbers. 
We denote by $\col(v_1,\dots,v_N)$ the vertical concatenation of the vectors $v_1,\dots,v_N$ and by $\diag(d_1, \dots, d_N)$ the block-diagonal matrix with blocks $d_1, \dots, d_N$ on the diagonal. 
Given $V: X \to \R$, we denote by $\Omega_V(\gamma) = \{x \in X\mid V(x) \leq \gamma\}$ the sublevel set of $V$ at level $\gamma \in \R$.
\section{Scenario Description}
\label{sec:problem_formulation}

In this section, we describe the class of systems we consider in this work.
Specifically, we focus on interconnected nonlinear dynamical systems modeled in continuous time as
\begin{subequations}\label{eq:plant_ct}
    \begin{align}
        \xcontdot(\tti) &= \slowc(\xcont(\tti), \xicont(\tti), \ucont(\tti))\label{eq:slow_plant_ct}\\
        \prct \xicontdot(\tti) &= \fastc(\xcont(\tti), \xicont(\tti), \ucont(\tti)),\label{eq:fast_plant_ct}
    \end{align}
\end{subequations}
where $\xcont(\tti) \in \domX \subseteq \R^{n}$ is the target state at time $\tti \in \R$, $\xicont(\tti) \in \domXi \subseteq \R^{p}$ is the extra dynamics state and $\ucont(\tti) \in \domU \subseteq \R^{m}$ is the control input at time $\tti \in \R$.
The functions $\slowc: \domX \times \domXi \times \domU \to \R^n$ and $\fastc: \domX \times \domXi \times \domU \to \R^p$ describe the target and extra vector fields, respectively, and are continuously differentiable.
The set $\domX \times \domXi$ is forward invariant for system~\eqref{eq:plant_ct}.
Finally, $\prct > 0$ is a parameter that allows for arbitrarily accelerating the extra dynamics~\eqref{eq:fast_plant_ct}.
To bridge the gap between the continuous-time physical model and the discrete-time MPC framework, we consider a sampled-data representation of the system. 
To this end, we use $\iter \in \N$ to denote the discrete-time index, and we let $\xt = \xcont(\tti_{\iter})$, $\xit = \xicont(\tti_{\iter})$, and $\ut = \ucont(\tti_{\iter})$ be the sampled states and input at time $\tti = \tti_\iter := \iter\pr$, where $\pr > 0$ denotes the sampling time and we treat it as a tunable parameter. 
In line with typical MPC schemes, we consider a zero-order hold on the input such that $u(\tti) = \ut$ for all $\tti \in [\tti_{\iter}, \tti_{\iter} + \pr)$. %
Then, we express the discrete-time evolution of system~\eqref{eq:plant} as
\begin{subequations}\label{eq:plant}
    \begin{align}
        \xtp &= \slow(\xt, \xit, \ut, \pr)
        \label{eq:slow_plant}
        \\
        \xitp &= \fast(\xt, \xit, \ut, \pr),
        \label{eq:fast_plant}
    \end{align}
\end{subequations}
where $\slow: \domX \times \domXi \times \domU \times \R\ud{+} \to \domX$ and $\fast: \domX \times \domXi \times \domU \times \R\ud{+} \to \domXi$ are the discrete-time vector fields obtained from the continuous-time dynamics by sampling at time intervals of length $\pr$, namely 
\begin{subequations}\label{eq:slow_fast_definition}
    \begin{align}
        \slow(x, \xi, \uu, \pr) 
        &
        := x 
        + \int_{0}^{\pr} \slowc(\flow(x,\xi,u,\tint), \uu) \, d\tint
        \label{eq:slow_definition}
        \\
        \fast(x, \xi, \uu, \pr) 
        &:= \xi + \int_{0}^{\pr} \frac{1}{\prct}\fastc(\flow(x, \xi, u, \tint),\uu)\, d\tint,
        \label{eq:fast_definition}
    \end{align}
\end{subequations}
where $\flow(x, \xi, u,\tint) \in \domX \times \domXi$ denotes the solution to the continuous-time system~\eqref{eq:plant_ct} initialized at $(x,\xi) \in \domX \times \domXi$ and subject to the constant input $u(\tint) = u \in \domU$ for all $\tint \in [0, \pr)$.

\begin{assumption}\label{ass:slowness}
    There exist $L_{\slowc}, L_{\fastc} > 0$ such that %
    \begin{align*}
        \norm{\slowc(x, \xi, u) - \slowc(x^\prime,\xi^\prime,u^\prime)}  &\leq L_{\slowc}\norm{\col(
            x -  x^\prime,
            \xi  -  \xi^\prime,
            u  -  u^\prime)}
        \\
        \norm{\fastc(x, \xi, u) - \fastc(x^\prime,\xi^\prime,u^\prime)}  &\leq  L_{\fastc}\norm{\col(
            x  -  x^\prime,
            \xi  -  \xi^\prime,
            u  -  u^\prime)},
    \end{align*}
    for all $x, x^\prime \in \domX$, $\xi, \xi^\prime \in \domXi$, and $u, u^\prime \in \domU$.
    \oprocend
\end{assumption}
As we formalize in the following assumption, we consider the case in which the extra dynamics~\eqref{eq:fast_plant_ct} has equilibria parametrized in $(x,u)$ and that these equilibria are globally exponentially stable, uniformly in $(x,u)$, in the limit case in which $x$ and $u$ are fixed.
\begin{assumption}\label{ass:ges_fast}
    There exists a $\lipeq$-Lipschitz continuous function $\xieq: \domX \times \domU \to \domXi$ such that
    \begin{align}\label{eq:xieq}
        \fastc(x, \xieq(x, u), u) = 0,
    \end{align}
    for all $x \in \domX$, $u \in \domU$.
    Moreover, there exists a continuously differentiable function $\cG: \R^{p} \to \R_+$ with $a_4$-Lipschitz continuous gradient $\nabla \cG$ such that it holds
    \begin{subequations}\label{eq:U}
        \begin{align}
            &a_1  \snorm{\xi  -  \xieq(x, u)}^2  \leq  \cG(\xi  -  \xieq(x, u))  \leq  a_2 \snorm{\xi  -  \xieq(x, u)}^2
            \label{eq:U_1}
            \\[0.5em]
            &\nabla \cG(\xi  -  \xieq(x, u))^\top \fastc(x,\xi, u) \leq -a_3 \snorm{\xi  -  \xieq(x, u)}^2,
            \label{eq:U_2}
        \end{align}
    \end{subequations}
    for all $\xi \in  \domXi$, $x\in \domX$, $u \in \domU$ and some constants $a_1, a_2, a_3, a_4 > 0$.
    \oprocend
\end{assumption}
The practical intuition behind this assumption is that the extra state $\xi$ exponentially converges to a corresponding steady-state configuration $\xieq(x, u)$ in the ideal case in which $x$ and $u$ are fixed.
Indeed, we remark that the left-hand side of~\eqref{eq:U_2} corresponds to the Lie derivative of $\cG$ along the vector field of the continuous-time extra dynamics~\eqref{eq:fast_plant_ct} using the dilated-time variable $\dt := \tti/\prct$ and an arbitrarily fixed pair $(x,u)$.
We provide as follows a practical example satisfying Assumption~\ref{ass:ges_fast}.

\subsection{Underactuated Robotic Arm with DC Motor} 
\label{sec:example}

We consider the PenduBot~\cite{zhang2002hybrid}: an underactuated two-link robotic arm driven by a DC motor at the first joint. 
Let $q = \left[\vartheta_1, \vartheta_2\right] \in \R^2$ be the vector of generalized coordinates and $\tau_m \in \R$ the first joint torque. 
Then, 
the equation of motion reads as
\begin{align}\label{eq:mechanical_dynamics}
    M(q) \ddot{q} + C(q, \dot{q}) \dot{q} + B \dot{q} + G(q)  =  \begin{bmatrix}
            \tau_m&
            0
    \end{bmatrix}\T,
\end{align}
with 
\begin{align*}
    M(q) &:= \begin{bmatrix}
        p_1 + 2 p_2 c_2 & p_3 + p_2 c_2 \\
        p_3 + p_2 c_2 & p_3
    \end{bmatrix}, & G(q) &:= \begin{bmatrix}
        p_4 s_1 + p_5 s_{12} \\
        p_5 s_{12}
    \end{bmatrix}, \quad
    C(q, \dot{q})\dot{q} := \begin{bmatrix}
        -p_2 s_2 \dot{q}_2 (2 \dot{q}_1 + \dot{q}_2)\\
        p_2 s_2 \dot{q}_1^2
    \end{bmatrix}, & B := \begin{bmatrix}
        b_1 & 0 \\
        0 & b_2
    \end{bmatrix},
\end{align*}
in which $c_2 := \cos(q_2)$, $s_2 := \sin(q_2)$, $s_1 := \sin(q_1)$, $s_{12} := \sin(q_1 + q_2)$, and
\begin{align*}
    p_1 = J_1 + J_2 + m_1 l_{c,1}^2 + m_2 (l_1^2 + l_{c,2}^2), \quad p_2 = m_2 l_1 l_{c,2}, \quad
    p_3 = J_2 + m_2 l_{c,2}^2, \quad p_4 = m_1 g l_{c,1} + m_2 g l_1,  \quad p_5 = m_2 g l_{c,2},
\end{align*}
where $m_i, l_i, l_{c,i}, J_i$ are the mass, length, center of mass distance, and moment of inertia of the $i$-th link, respectively, while $g$ is the gravity acceleration.
The electrical dynamics of the armature circuit reads as
\begin{align}\label{eq:dc_motor}
    L \dot{I} + R I + K_e \omega_1 = V,
\end{align}
where $L$ and $R$ are the armature inductance and resistance, respectively, $K_e > 0$ is the back-EMF constant, and $V \in \R$ is the applied voltage. 
The torque vector generated by the actuation system is given by $\tau_m = K_t I$, where $K_t > 0$ is the motor torque constant. 
To write the state space model, we define $\xcont = \left[\theta\ud{1, c}, \theta_{2,c}, \omega_{1,c}, \omega_{2,c}\right] \in \R^4$, consider the armature current $I \in \R$ as an extra state $\xicont$, and the applied voltage $V \in \R$ as the input $\ucont$. 
The continuous-time state space model is then given by
\begin{subequations}\label{eq:pendubot_dynamics}
    \begin{align}
        \xcontdot &= \begin{bmatrix}
            \x_{3,c} \\
            \x_{4,c} \\
             M^{-1}(\xcont) \left(-\left(C(\xcont)  +  B\right)\begin{bmatrix} \x_{3,c} \\ \x_{4,c} \\ \end{bmatrix} - G(\xcont) + \begin{bmatrix}
                 K_t \xicont
                \\
                0
            \end{bmatrix}\right)
        \end{bmatrix}\label{eq:pendubot_dynamics_slow}
        \\
        L\xicontdot &= -R \xicont - K_e \x_{3,c} + \ucont.\label{eq:pendubot_dynamics_fast}
    \end{align}
\end{subequations}
Thus, system~\eqref{eq:pendubot_dynamics} is an instance of the generic system~\eqref{eq:plant_ct} in which the role of $\prct$ is played by the armature inductance $L$, while $\slowc$ and $\fastc$ are defined as the right-hand sides of \eqref{eq:pendubot_dynamics_slow} and \eqref{eq:pendubot_dynamics_fast}, respectively.
We show as follows that 
Assumption%
~\ref{ass:ges_fast} is satisfied. 
Subsystem~\eqref{eq:pendubot_dynamics_fast} admits the equilibrium function $\xieq: \domX \times \domU \to \domXi$ given by
\begin{align}
    \xieq(x, u) = 
    \tfrac{1}{R} \left(
        - \begin{bmatrix}0 & 0 &K_e & 0\end{bmatrix} x + u
    \right).\label{eq:manifold_ex}
\end{align}
To show exponential stability of~\eqref{eq:manifold_ex}, we pick the candidate Lyapunov function $\cG(\xi - \xieq(x, u)) = \frac{1}{2} (\xi - \xieq(x,u))^2$, which trivially satisfies~\eqref{eq:U_1}. %
For an arbitrarily fixed pair $(x,u)$ and using the dilated-time variable $\dt := \tti/\prct$, the Lie derivative of $\cG$ along system~\eqref{eq:pendubot_dynamics_fast} is
\begin{align*}
    &\nabla G(\xi - \xieq(x, u))^\top  (-R \xi - \begin{bmatrix}0 & 0 &K_e & 0\end{bmatrix} x+ u)
    = - R (\xi - \xieq(x, u))^2.
\end{align*}
Since $R > 0$, this guarantees that also~\eqref{eq:U_2} is satisfied.
\begin{remark} 
    As suggested by the generality of the equations in~\eqref{eq:mechanical_dynamics}, we note that this approach can be extended to any mechanical system admitting such a formulation and controlled by an auxiliary system (e.g., the DC motor considered above), whose speed can be arbitrarily tuned.
    \oprocend
\end{remark}

\section{Suboptimal and Reduced-Order Tunable MPC: SMART-MPC}
\label{sec:algo}

In this section, we introduce the proposed \algoCAPS/ (\algo/) architecture and state the corresponding stability guarantees for the resulting closed-loop system.

\subsection{\algo/: Description}

By exploiting the system structure outlined in Section~\ref{sec:problem_formulation}, we design an MPC strategy that operates on a reduced-order model of system~\eqref{eq:plant}.
Specifically, we approximate the full dynamics by a reduced dynamics map $\redc: \domX \times \domU \to \R^n$
that captures the target vector field $\slowc$ evaluated on the extra dynamics manifold $\xicont = \xieq(\xcont, \ucont)$, namely
\begin{align}\label{eq:redc}
    \redc(\xcont, \ucont) := \slowc(\xcont, \xieq(\xcont, \ucont), \ucont).
\end{align}
Consistently with the computational-efficiency rationale motivating the use of $\redc$, the input $\ut$ is iteratively computed through a \emph{suboptimal} MPC scheme applied to a discretized version of the reduced-order system, namely 
\begin{align*}
    \xtp = \redd(\xt, \ut),
\end{align*}
where $\redd$ is a discrete-time approximation of $\dot{x}\ct = \redc(\xcont, \ucont)$ (obtained, e.g., through a Runge-Kutta scheme).
\begin{figure}[htpb]
    \centering
    \includegraphics[width=0.9\columnwidth]{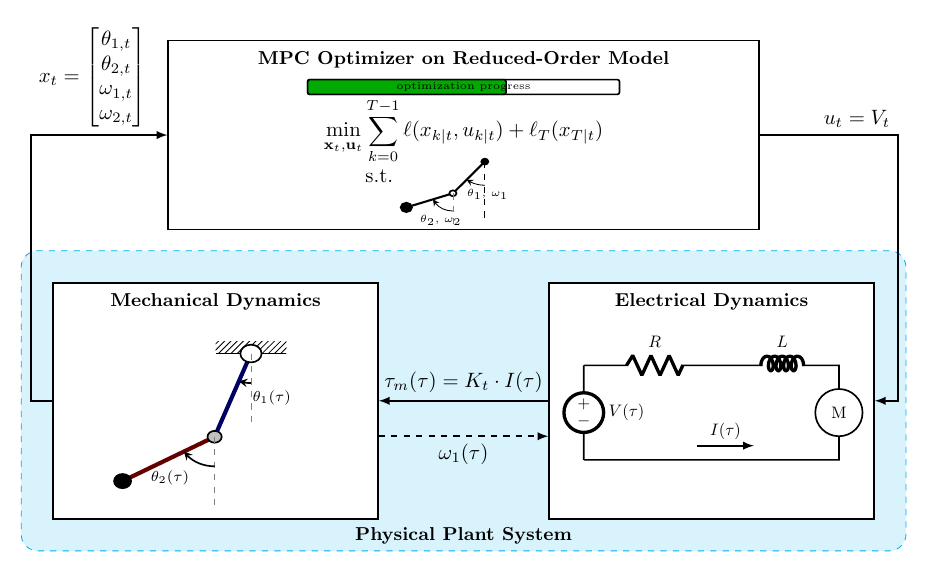}
    \caption{Block diagram representation of the closed-loop system resulting from the interconnection of the full plant dynamics~\eqref{eq:plant} and the \algo/~\eqref{eq:mpc_dyn} for the PenduBot-motor setup (cf.~Section~\ref{sec:example}).}
    \label{fig:bd_pendulum}
\end{figure}
In detail, given a time horizon $\hor \in \N$, at each $\iter \in \N$, we consider the finite-horizon optimal control problem
\begin{align}\label{eq:MPC_problem}
    \begin{aligned}
        \min_{\mathbf{x}_\iter, \mathbf{u}_\iter} \quad\quad & %
        \sum_{\itermpc=0}^{\hor-1}\cost\left(\x_{\itermpc \mid \iter}, u_{\itermpc \mid \iter}\right) + \costf \left(\x\ud{\hor \mid \iter}\right) 
        \\
        \text{s.t.} \quad\quad
            & x\ud{0 \mid \iter} = \xt
            \\%
            & \x_{\itermpc+1 \mid \iter} = \redd(\x_{\itermpc \mid \iter}, u_{\itermpc \mid \iter}),  &&\itermpc = 0, \ldots, \hor-1
            \\
            & u_{\itermpc \mid \iter} \in \mathcal{U}, \quad  \x_{\itermpc \mid \iter} \in \mathcal{X}, 
            &&\itermpc = 0, \ldots, \hor-1 
            \\
            & x\ud{\hor \mid \iter} \in \mathcal{X}_f, %
    \end{aligned}
\end{align}
where $\hor \in \N$ is the prediction horizon, $\cost: \domX \times \domU \to \R\ud{+}$ is the stage cost, $\costf: \domX \to \R\ud{+}$ is the terminal cost, $\mathcal{U} \subseteq \domU$ is the input constraint set, $\mathcal{X} \subseteq \domX$ is the state constraint set, $\hor$ is the prediction horizon, $\mathcal{X}_f \subseteq \domX$ is the terminal constraint set, $\mathbf{x}_\iter = \left\{x_{0\mid \iter}, \dots, \x\ud{\hor \mid \iter}\right\}$ and $\mathbf{u}_\iter = \left\{u_{0\mid \iter}, \dots, u_{\hor-1 \mid \iter}\right\}$ are the state and input optimization sequences, being $x_{\itermpc \mid \iter}$ and $u_{\itermpc \mid \iter}$ the predicted state and input at the optimization step $\itermpc$ within the horizon at time step $\iter$.
Problem~\eqref{eq:MPC_problem} is addressed suboptimally.
For the sake of generality, we do not specify the particular optimization scheme used to solve problem~\eqref{eq:MPC_problem} suboptimally, but we rather assume that such a scheme can be described by a discrete-time dynamical system of the form
\begin{subequations}\label{eq:mpc_dyn}
    \begin{align}
        \ztp &= \mpc(\zt, \xt) 
        \\
        \ut &= \Pi(\zt),
    \end{align}
\end{subequations}
where $\zt \in \domZ \subseteq \R^{\dimz}$ is the optimizer state, $\mpc: \domZ \times \domX \to \domZ$ is the optimal control algorithm and $\Pi: \domZ \to \domU$ is a projection map that extracts only the first component of the current optimal input sequence estimate $\mathbf{u}_\iter$ from the overall optimizer state $\zt$. 
Indeed, the optimizer state $\zt$ does not necessarily coincide with the current estimate of the optimal input sequence $\mathbf{u}_\iter$. 
Depending on the specific optimization scheme considered, $\zt$ may also include the predicted state sequence $\mathbf{x}_\iter$ and/or auxiliary variables used by the optimization algorithm, such as Lagrange multipliers or momentum terms.
Algorithm \ref{alg:mpc_subopt} summarizes the steps of the proposed \algo/ strategy.
Fig.~\ref{fig:bd_pendulum} shows a schematic representation of the closed-loop system in the PenduBot-motor setup described in Section~\ref{sec:example}.
\begin{algorithm}[t]
    \caption{\algo/}
    \label{alg:mpc_subopt}
    \begin{algorithmic}[1]

        \For{$t = 0, 1, 2, \ldots$}
       \State \hspace{-1.64em} Full-order plant actuation:
            \vspace{-1em}
            \begin{align*}
                \ut &= \Pi(\zt)
                \\
                \xtp &= \slow(\xt, \xit, \ut, \pr)
                \\
                \xitp &= \fast(\xt, \xit, \ut, \pr)
            \end{align*}
        \vspace{-1.5em}
         \State \hspace{-1.64em} MPC optimizer step on reduced-order problem~\eqref{eq:MPC_problem}:
            \vspace{-1em}
            \begin{align*}
                \ztp &= \mpc(\zt, \xt)
            \end{align*}
        \vspace{-1.5em}
    \EndFor
\end{algorithmic}
\end{algorithm}

\subsection{\algo/: Recursive Feasibility and Stability}
\label{sec:algorithm_presentation}

This subsection details the main result of this paper on the closed-loop system arising from the interconnection of the plant~\eqref{eq:plant} and the optimizer~\eqref{eq:mpc_dyn}, namely
\begin{subequations}\label{eq:interconnected_sys}
    \begin{align}
        \xtp &= \slow(\xt, \xit, \Pi(\zt), \pr)
        \label{eq:interconnected_system_x}
        \\
        \xitp &= \fast(\xt, \xit, \Pi(\zt),\pr)
        \label{eq:interconnected_system_xi}
        \\
        \ztp &= \mpc(\zt, \xt).
        \label{eq:interconnected_system_z}
    \end{align}
\end{subequations}
In order to provide this result, some assumptions are detailed in the following.
To this end, we denote as $\feas \subseteq \domX$ the set of initial target states for which problem~\eqref{eq:MPC_problem} is feasible on the horizon $\hor$, namely, $\feas$ reads as
\begin{align}
    \feas := \{x \in \cX \mid &\exists {u\ud{0},\dots,u\ud{\hor-1}} \in \cU \text{ s.t. } x\ud{\hor} \in \cX_f,
    %
    \x\ud{\itermpc+1} = \redd(\x\ud{\itermpc},\uu\ud{\itermpc}), \itermpc = 0, \dots, \hor-1,
    \notag\\
    &\x\ud{\itermpc} \in \cX, \itermpc = 0, \dots, \hor-1,
    \x\ud0 = x\}.\label{eq:cX_N}
\end{align}
Then, 
we assume as follows that the optimizer dynamics~\eqref{eq:mpc_dyn} has a globally exponentially stable equilibrium for each initial condition $x \in \feas$. 
In this regard, it is important to note that, in the actual closed-loop system~\eqref{eq:interconnected_sys}, at each iteration $\iter$, the optimizer dynamics~\eqref{eq:mpc_dyn} focuses on an updated version of problem~\eqref{eq:MPC_problem} with initial condition corresponding to the current state $\xt$ of the plant.
\begin{assumption}\label{ass:ges_MPC}
    There exists $\zstar: \feas \to \domZ$ such that
    \begin{align}\label{eq:optimum_equilibrium}
        \zstar(x) = \mpc(\zstar(x), x).
    \end{align}
    Moreover, there exists a continuously differentiable function $\cL: \R^{\dimz} \to \R\ud{+}$ with $d_4$-Lipschitz continuous gradient $\nabla \cL$ such that
    \begin{subequations}\label{eq:cL}
            \begin{align}
            &d_1\norm{z - \zstar(x)}^2 \leq \cL(z - \zstar(x)) \leq d_2\norm{z - \zstar(x)}^2
            \label{eq:cL_1}
            \\
            &\cL(\mpc(z,x) - \zstar(x)) - \cL(z - \zstar(x)) \leq - d_3\norm{z - \zstar(x)}^2,
            \label{eq:cL_2}
        \end{align}
    \end{subequations}
    for all $z \in \domZ$, $x \in \feas$, and some $d_1,d_2,d_3,d_4 > 0$.
    Further, $\zstar$, $\mpc$, and $\Pi$ are $\lip_{\zstar}$-, $\lip_{\mpc}$- ,and $\lip_{\Pi}$-Lipschitz continuous, respectively, for some $\lip_{\zstar}, \lip_{\mpc}, \lip_{\Pi} > 0$.
    \oprocend
\end{assumption}
With the properties of the optimization algorithm established, we now turn our attention to the corresponding control problem.
In particular, we guarantee that the reduced model (see~\eqref{eq:redc}) controlled through the optimal policy $u = \Pi(\zstar(x))$ arising from problem~\eqref{eq:MPC_problem} has an exponentially stable equilibrium $\xstar \in \feas$.  %
\begin{assumption}\label{ass:red_ges}
    There exists $\xstar \in \feas$ such that 
    \begin{align*} 
        \redc(\xstar, \Pi(\zstar(\xstar))) =  0.
    \end{align*}  
    Moreover, there exists a continuously differentiable function $W: \feas \to \R_+$ with $c_4$-Lipschitz continuous gradient $\nabla W$such that
    \begin{subequations}\label{eq:V_properties}
        \begin{align}
            c_1 \norm{x-\xstar}^2 \leq W(x) &\leq c_2 \norm{x-\xstar}^2
            \label{eq:quadratic_bounds_mpc}
            \\
            \nabla W(x)^\top \redc(x, \Pi(\zstar(x))) &\leq - c_3 \norm{x-\xstar}^2,
            \label{eq:descent_property_mpc}
        \end{align} 
    \end{subequations}
    for all $x\in \feas$ and some $c_1,c_2,c_3,c_4 > 0$.
    \oprocend
\end{assumption}
\begin{remark}
    Sufficient conditions on the underlying optimal control problem~\eqref{eq:MPC_problem} can be established to guarantee Assumption~\ref{ass:red_ges}, namely, exponential stability of $\xstar$ for the reduced-order continuous-time dynamics $\xcontdot = \redc(\xcont,\ucont)$ under the optimal MPC law $\ucont = \Pi(\zstar(\xcont))$.
    These conditions regard, e.g., appropriate choice of stage and terminal cost functions, terminal constraints, sufficiently long prediction horizon, and regularity conditions on the system dynamics, see, e.g.,~\cite[Chap.~2, pp.~139--141]{rawlings2020model}.
    \oprocend
\end{remark}

Then, by recalling the definition of the feasible set $\feas$ (cf.~\eqref{eq:cX_N}), we introduce the parameter $\gammam > 0$ to characterize the largest level set of the Lyapunov function $W$ (cf. Assumption~\ref{ass:red_ges}) contained in $\feas$.
Namely, we define %
\begin{align}
    \gammam := \sup\{\gamma \in \R_+ \mid \Omega_W(\gamma) \subseteq \feas\}.\label{eq:gammam}
\end{align}
Now, we can formalize the recursive feasibility and stability properties of the closed-loop system~\eqref{eq:interconnected_sys}.
To this end, we introduce the Lyapunov function $V: \feas \times \domXi \times \domZ \to \R\ud{+}$ defined as
\begin{align}
    V(x, \xi, z) &:= W(x) + \cG(\xi - \xieq(x, \Pi(z)))
    + \kappa \cL(z - \zstar(x)) ,
    \label{eq:V_pre_thm}
\end{align}
where $\cG$, $\cL$, and $W$ are the functions introduced in Assumptions~\ref{ass:ges_fast},~\ref{ass:ges_MPC}, and~\ref{ass:red_ges}, respectively, while $\kappa > 0$ is a positive constant that will be characterized in the analysis.
\begin{theorem}\label{th:main}
    Consider the closed-loop system~\eqref{eq:interconnected_sys} and let Assumptions~\ref{ass:slowness},~\ref{ass:ges_fast},~\ref{ass:ges_MPC}, and~\ref{ass:red_ges} hold. 
    Then, for all $\gamma \in (0,\gammam)$ and $\ratio > 0$, there exist $\bar{\pr}, \bar{\kappa} > 0$ such that, for all $\pr \in (0, \bar{\pr})$, $\prct = \ratio\pr$, and $\kappa \geq \bar{\kappa}$, the following properties hold:
    \begin{itemize}
        \item \textbf{(Recursive Feasibility):}
        the set $\Omega_V(\gamma)$ is forward invariant for system \eqref{eq:interconnected_sys}, implying that $\xt \in \feas$ for all $\iter\in\N$.
        \item \textbf{(Stability):} $(\xstar,\xieq(\xstar,\Pi(\zstar(\xstar))),\zstar(\xstar))$ is an exponentially stable equilibrium point of system~\eqref{eq:interconnected_sys} within the forward invariant set $\Omega_V(\gamma)$.\oprocend
    \end{itemize}
\end{theorem}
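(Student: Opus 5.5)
We prove that $V$ in \eqref{eq:V_pre_thm} strictly decreases along trajectories of \eqref{eq:interconnected_sys} in the sublevel set $\Omega_V(\gamma)$. We use the coordinates $\txit := \xit - \xieq(\xt,\Pi(\zt))$ and $\tzt := \zt - \zstar(\xt)$, and we set $\tilde x_\iter := \xt - \xstar$. The target is a bound of the form
\begin{align*}
V(\xtp,\xitp,\ztp) - V(\xt,\xit,\zt) \leq -\pr\, \chi^\top Q(\pr,\kappa)\, \chi,
\end{align*}
where $\chi := \col(\snorm{\tilde x_\iter},\snorm{\txit},\snorm{\tzt})$ and $Q$ is a $3\times 3$ matrix. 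We then choose $\kappa\ge\bar\kappa$ and $\pr<\bar\pr$ so that $Q$ is positive definite.

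\textbf{Slow part $W$.} By Assumption~\ref{ass:slowness} and Gronwall on $[0,\pr]$, the flow $\flow$ stays $O(\pr)$-close to its initial condition, provided $\prct = \ratio\pr$ is used to control the fast component. This works because over one sample the dilated time $\pr/\prct = 1/\ratio$ is fixed, so $\xi$ moves by an $O(1)$ amount proportional to $\snorm{\txit}+\pr(\ldots)$. Hence $\xtp - \xt = \pr\,\slowc(\xt,\xit,\ut) + O(\pr^2)$ terms, which are quadratic in $\chi$ since all vector fields vanish at the equilibrium. We write
\begin{align*}
\slowc(\xt,\xit,\ut) = \redc(\xt,\Pi(\zstar(\xt))) + \big[\slowc(\xt,\xit,\ut) - \slowc(\xt,\xieq(\xt,\ut),\ut)\big] + \big[\redc(\xt,\ut) - \redc(\xt,\Pi(\zstar(\xt)))\big].
\end{align*}
The first bracket is bounded by $L_{\slowc}\snorm{\txit}$, and the second by $L_{\slowc}(1+\lipeq)\lip_\Pi\snorm{\tzt}$. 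Using \eqref{eq:descent_property_mpc} and the $c_4$-Lipschitz gradient (descent lemma) then gives
\begin{align*}
\Delta W \le -\pr c_3\snorm{\tilde x}^2 + \pr k_1\snorm{\tilde x}(\snorm{\txi}+\snorm{\tz}) + \pr^2 k_2\snorm{\chi}^2.
\end{align*}

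\textbf{Fast part $\cG$.} We first analyze the frozen-$(x,u)$ flow. With $(x,u)$ frozen, \eqref{eq:U_2} yields $\cG(\txi(\pr)) \le e^{-a_3/(a_2\ratio)}\cG(\txi)$, a contraction by a fixed factor $\rho<1$ that is independent of $\pr$. The actual $x$ drifts by $O(\pr)$ during the sample. In addition, the manifold point changes from $\xieq(\xt,\ut)$ to $\xieq(\xtp,\Pi(\ztp))$. This shift is bounded by $\lipeq(\snorm{\xtp-\xt} + \lip_\Pi\snorm{\ztp-\zt})$. Here $\snorm{\ztp-\zt} = \snorm{\mpc(\zt,\xt)-\zt} \le (\lip_\mpc+1)\snorm{\tzt}$ is \emph{not} small in $\pr$. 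This is the main obstacle: the optimizer moves at a fixed, $\pr$-independent rate, which shifts the fast manifold by $O(\snorm{\tz})$. Because the fast contraction is also $O(1)$ per step (thanks to $\prct=\ratio\pr$), the resulting cross terms have the form $\snorm{\txi}\snorm{\tz}$ and $\snorm{\tz}^2$ with $O(1)$ coefficients. The plan is to absorb them with the $\kappa$-weighted optimizer decrease, which also has an $O(1)$ coefficient: by \eqref{eq:cL_2} it is $-\kappa d_3\snorm{\tz}^2$.

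\textbf{Optimizer part $\cL$ and assembly.} We write $\ztp-\zstar(\xtp) = (\mpc(\zt,\xt)-\zstar(\xt)) + (\zstar(\xt)-\zstar(\xtp))$. The second term is bounded by $\lip_{\zstar}\snorm{\xtp-\xt} = O(\pr)\snorm{\chi}$. Using \eqref{eq:cL_2} and the $d_4$-smoothness of $\cL$ gives $\Delta\cL \le -d_3\snorm{\tz}^2 + k_3\pr\snorm{\tz}\snorm{\chi} + k_4\pr^2\snorm{\chi}^2$.

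With these three estimates, $Q$ has diagonal entries of order $\pr c_3$, $(1-\rho)a_1$, and $\kappa d_3$. The off-diagonal entries are of order $\pr$, $O(1)$ (the $\txi$–$\tz$ entry), and $\kappa\pr$. We first fix $\kappa$ large enough to dominate the $O(1)$ $\txi$–$\tz$ coupling. Then we take $\pr$ small so that the $\pr\kappa$ and $\pr^2$ terms are dominated by the diagonal terms; the $\tilde x$ row is handled by Young's inequality, using that its couplings scale with $\pr$ as well. A Sylvester/Schur check then gives $Q\succeq \pr\,c\,I$, whence $\Delta V \le -c\pr\snorm{\chi}^2$.

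For recursive feasibility, the bound $\Delta V\le 0$ makes $\Omega_V(\gamma)$ forward invariant. On $\Omega_V(\gamma)$ we have $W(\xt)\le V\le\gamma<\gammam$, so $\xt\in\Omega_W(\gamma)\subseteq\feas$. This also justifies using Assumptions~\ref{ass:ges_MPC}–\ref{ass:red_ges} at each step, and the Gronwall constants can be taken uniform on this compact-in-$x$ region. For exponential stability, $V$ is sandwiched by quadratics in $\chi$, and $\chi$ is equivalent to the distance from the equilibrium via the Lipschitz maps $\xieq$, $\zstar$, $\Pi$. Combining this with $\Delta V\le -c\pr\snorm{\chi}^2$ gives $V_{\iter+1}\le(1-c'\pr)V_\iter$, hence exponential convergence of $(\xt,\xit,\zt)$ to $(\xstar,\xieq(\xstar,\Pi(\zstar(\xstar))),\zstar(\xstar))$.
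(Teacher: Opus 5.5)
Your decrease estimate is sound and uses the same mechanism as the paper:
\begin{itemize}
\item the same $V$;
\item the frozen-flow contraction $\cG(\txi(\pr))\le e^{-a_3/(a_2\ratio)}\cG(\txi)$, made $\pr$-independent by $\prct=\ratio\pr$;
\item the $(\lip_{\mpc}+1)\snorm{\tz}$ shift of the fast manifold, absorbed by $\kappa$;
\item then $\pr$ small.
\end{itemize}
The only differences are that you merge everything into one $3\times 3$ matrix and use a descent-lemma expansion of $W$ at $\xt$ in place of Lemma~\ref{lem:rs} and Theorem~\ref{th:theorem_generic}.

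\textbf{The gap: recursive feasibility is circular.} $V$, $\zstar$ and the bounds in Assumptions~\ref{ass:ges_MPC}--\ref{ass:red_ges} exist only on $\feas$. Before you can evaluate $V(\xtp,\xitp,\ztp)$, use $\lip_{\zstar}\snorm{\xtp-\xt}$ in $\Delta\cL$, or apply the descent lemma for $W$ between $\xt$ and $\xtp$, you must already know that $\xtp$ (and that segment) lies in $\feas$. Knowing $\xt\in\Omega_W(\gamma)$ does not give this, and your step ``$\Delta V\le 0$ makes $\Omega_V(\gamma)$ invariant'' presupposes it.

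The paper closes this with the separate one-step condition~\eqref{eq:one_step}. On $\Omega_V(\gamma)$ one has $\snorm{\xtp-\xt}\le \pr C\sqrt{\gamma/\min\{c_1,a_1,\kappa d_1\}}$. This bound uses only Assumption~\ref{ass:slowness} and Lipschitz maps evaluated at $\xt$ and $\xstar$, so it does not need $\xtp\in\feas$. One then picks $\bar\gamma\in(\gamma,\gammam)$ and takes $\pr$ small enough that this displacement is below the positive distance between $\Omega_W(\gamma)$ and $\{W>\bar\gamma\}$. That places $\xtp$ in $\Omega_W(\bar\gamma)\subseteq\feas$. This is exactly where the strict inequality $\gamma<\gammam$ is needed, and your argument never uses that margin.

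\textbf{Smaller remarks.}
\begin{itemize}
\item Your intermediate claim $\xtp-\xt=\pr\,\slowc(\xt,\xit,\ut)+O(\pr^2)$ is inaccurate. Since $\xi$ moves by an $O(1)\snorm{\txit}$ amount within one sample, the remainder is $O(\pr)\snorm{\txit}+O(\pr^2)\snorm{\chi}$. Your final $\Delta W$ bound still holds, because this term only feeds the $\pr k_1\snorm{\tilde x}\snorm{\txi}$ cross term.
\item $\kappa\Delta\cL$ contains a $\kappa\pr^2\snorm{\tilde x}^2$ term, so your $\bar\pr$ depends on $\kappa$. You therefore prove the statement with $\kappa$ fixed before $\bar\pr$, not uniformly in $\kappa\ge\bar\kappa$. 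The paper's proof has the same dependence, through $b_4$ in Theorem~\ref{th:theorem_generic}.
\item In your favour: you compare the true sampled flow with the frozen-$(x,u)$ flow and treat the $O(\pr)$ intra-sample drift of $x$ as a perturbation. This makes your fast-part estimate more careful than the paper's Lemma~\ref{lemma:bl}, which treats $x$ as frozen inside $\fast$ and uses $\fast(x,\xieq(x,u),u,\pr)=\xieq(x,u)$. That identity holds only for the frozen-$x$ map.
\end{itemize}
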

The proof of Theorem~\ref{th:main} is provided in Section~\ref{sec:proof}.
\begin{figure*}[htpb]
  \centering
  \begin{subfigure}[b]{0.48\textwidth}
    \centering
    \includegraphics[width=\textwidth]{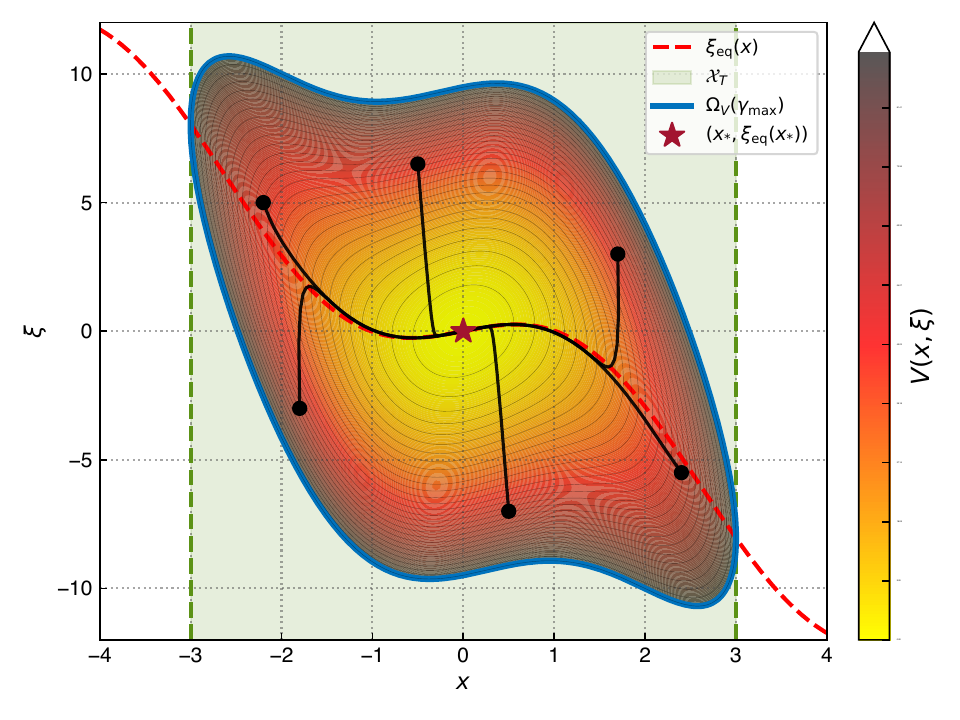}
    \caption{}
    \label{fig:2d_levelsets}
  \end{subfigure}
  \hfill
  \begin{subfigure}[b]{0.48\textwidth}
    \centering
    \includegraphics[width=\textwidth]{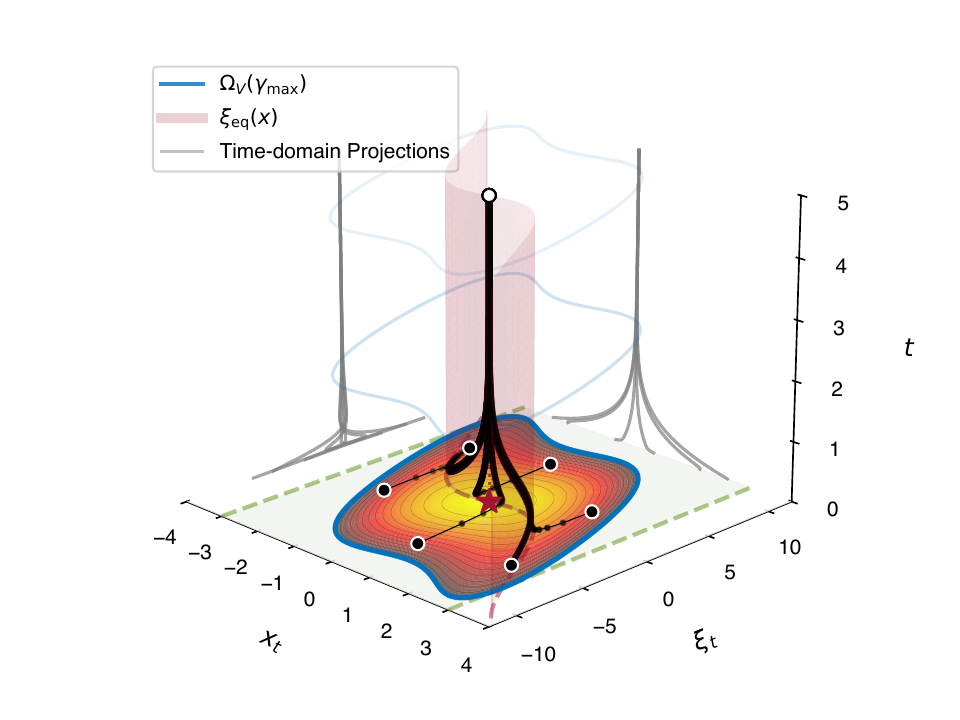}
    \caption{}
    \label{fig:3d_spacetime}
  \end{subfigure}
\caption{(a) Phase-portrait of the toy example showing the maximum forward invariant set $\Omega_V(\gammam)$ (thick blue line), tangent to the boundary of the feasible set $\feas$ (green area within $x \in [-\pi, \pi]$). (b) Space-time evolution highlighting the timescale separation, with fast boundary-layer convergence followed by slower target motion along the equilibrium manifold. (red surface).} 
  \label{fig:cbf_comparison}
\end{figure*}

\subsection{Discussion of Theorem~\ref{th:main} via a Toy Example}

Theorem~\ref{th:main} guarantees recursive feasibility and stability for the closed-loop system~\eqref{eq:interconnected_sys} in $\Omega_V(\gamma)$.
We recall that $\gammam$ (see~\eqref{eq:gammam}) characterizes the largest level set of the Lyapunov function $W$ (cf. Assumption~\ref{ass:red_ges}) that is contained in the feasible set $\feas$ (see~\eqref{eq:cX_N}).
Since $V$ (cf.~\eqref{eq:V_pre_thm}) is the candidate Lyapunov function for the closed-loop system~\eqref{eq:interconnected_sys}, the latter must be initialized in $\Omega_V(\gamma)$ to ensure that $\x_0 \in \feas$. 
In detail, these properties are ensured for a sufficiently small $\pr$, that is, when the plant is sampled sufficiently fast (see~\eqref{eq:slow_fast_definition}).
To provide a graphical intuition of these results, we consider the %
feedback interconnected system
\begin{subequations}\label{eq:example_ct}
    \begin{align}
        \dot{x}\ct(\tti) &= \xicont(\tti) - \alpha \sin(x\ct(\tti))
        \label{eq:example_target_ct}
        \\
        \prct\dot{\xi}\ct(\tti) &= \ucont(\tti) + \alpha \sin(x\ct(\tti)) - \xicont(\tti),
        \label{eq:example_extra_ct}
    \end{align}
\end{subequations}
with $\xcont(\tti), \xicont(\tti), \ucont(\tti) \in \R$ and $\alpha, \prct > 0$.
The steady-state map of the extra subsystem \eqref{eq:example_extra_ct} reads as 
\begin{align}
    \xieq(x, u) = u + \alpha \sin(x),
\end{align}
which yields a reduced-order model corresponding to a single integrator $\dot{x}\ct(\tti) = u\ct(\tti)$. 
The control objective is to steer the target state $\x$ towards the origin $\xstar=0$.
For graphical clarity, we assume the MPC state $\zt$ always lies on its optimal manifold, i.e., $\zt = \zstar(\xt)$, for all $\iter \in \N$. 
In the optimal control problem, we consider a quadratic cost penalized over $\hor = 30$ steps, the input feasible set $[-u_{\max}, u_{\max}]$, and a Forward Euler discretization of the continuous-time dynamics~\eqref{eq:example_ct} with sampling time $\prm > 0$, giving rise to the discrete-time dynamics $\xtp = \xt + \prm\ut$.
Consequently, the feasible set $\feas$ (cf.~\eqref{eq:cX_N}) is given by the maximum state deviation steerable to the origin within $\hor$ steps under maximum control authority, namely $\feas = \{x \in \R \mid |x| \leq \hor \prm u_{\max}\}$. 
Following the discussion in the previous section, the composite Lyapunov function $V$ for the interconnected system can be constructed, e.g., as 
\begin{align}
    V(\x, \xi) = 10\x^2 + \txi^2.
\end{align}
Hence, the maximum level set contained in the feasible region is $\gammam = 10\x_{\max}^2$, with $\x_{\max} \in \partial \feas$. 
The behavior of the closed-loop system is illustrated in Fig. \ref{fig:cbf_comparison}. 
Specifically, Fig. \ref{fig:2d_levelsets} shows the level sets of the composite Lyapunov function $V(\x, \xi)$. 
The green area, bounded by the vertical dashed lines, represents the feasible set $\feas$. 
The dashed red curve represents the equilibrium manifold of the extra dynamics. 
The thick blue curve depicts the maximum invariant set $\Omega_V(\gammam)$.
In agreement with Theorem \ref{th:main}, we note that $\Omega_V(\gammam)$ is tangent to the boundary of $\feas$ strictly at points where $\xi$ tracks its equilibrium manifold. 
The black line demonstrates that randomly initialized closed-loop trajectories are successfully steered towards the origin without leaving the invariant set. 
Furthermore, Fig.~\ref{fig:3d_spacetime} shows the space-time evolution of the closed-loop system to explicitly illustrate the timescale separation. 
The nonlinear equilibrium manifold is depicted as a red surface extending through time.
 The 3D black curves capture the singular perturbation paradigm: an almost horizontal initial displacement (fast boundary-layer dynamics converging to the manifold in negligible time), followed by a slower, upward climbing motion along the surface towards the origin. 
 The blue curves show the forward invariant set. Finally, the grey curves on the lateral planes help to visualize the isolated behavior of the target and extra states.

\section{Theoretical Analysis}
\label{sec:theoretical_analysis}

This section proves Theorem~\ref{th:main} by analyzing the closed-loop system (Fig. \ref{fig:bd_interconnection}) arising from the interconnection of the plant~\eqref{eq:plant} and the MPC optimizer~\eqref{eq:mpc_dyn}.
We interpret this closed-loop system as a two-time-scale system: the fast subsystem comprises the plant extra dynamics~\eqref{eq:fast_plant} and the optimizer mechanism~\eqref{eq:mpc_dyn}, while the slow part isolates the target dynamics~\eqref{eq:slow_plant}, see Fig.~\ref{fig:bd_interconnection}.
\begin{figure}[H]
  \centering
  \includegraphics[scale=\scale]{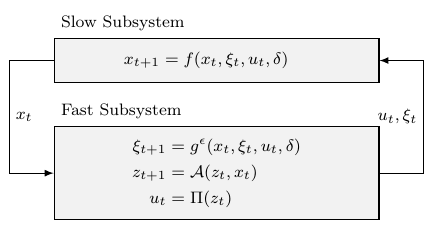}
  \caption{Block diagram representation of the interconnected system.}
  \label{fig:bd_interconnection}
\end{figure}
First, in Section~\ref{sec:single_integrator}, we show that system~\eqref{eq:interconnected_sys} enjoys some suitable properties related to the sampling time $\pr$ and Lipschitz continuity.
Then, as customary in two-time-scale system analysis, we study (i) the fast subsystem in the ideal case in which the slow state $\xt$ is fixed  (cf. Section~\ref{sec:bl}) and, conversely, (ii) the slow one in the ideal case in which the fast state $(\xit, \zt)$ lies in its equilibrium manifold (cf. Section~\ref{sec:reduced}).
Finally, in Section~\ref{sec:proof} we combine the results obtained in this preparatory phase to prove Theorem~\ref{th:main}.
Assumptions~\ref{ass:slowness}-\ref{ass:red_ges} hold true throughout the entire section.

\subsection{The Role of the Sampling Time $\pr$}
\label{sec:single_integrator}

First, we show that 
the variations of the slow dynamics $\slow$ can be controlled through the sampling time $\pr$, whereas those of the fast dynamics $\fast$ and $\mpc$ can be bounded uniformly with respect to $\pr$.
In this sense, we highlight that the sampling time $\pr$ plays the role of the tunable parameter required in two-time-scale analysis.
To compactly provide this result, we first introduce the discrete-time reduced dynamics $\redd: \domX \times \domU \times \R_+ \to \domX$ defined as
\begin{align}\label{eq:red}
    \red(x,u,\pr) := x + \int_{0}^{\pr}\redc(\flow_{\redc}(x,u,\tint),u)d\tint,
\end{align}
where $\flow_{\redc}(x,u,\tint) \in \R^{n}$ is the solution at time $\tint$ of the continuous-time reduced dynamics $\xcontdot = \redc(\xcont,\ucont)$ with initial condition $\xcont(0) = x \in \R^{n}$ and input $\ucont(\tint) = u \in \R^{m}$ for all $\tint \in \R$.
\begin{lemma}\label{lemma:single-int}
    For all 
    $\ratio > 0$, there exist $\bprsi, \lip_\slow, \lip_G, \tilde{\lip}_{\slow} > 0$ such that, for all $\pr \in (0, \bprsi)$ and $\prct = \ratio\pr$, it holds
    \begin{subequations}
        \begin{align}
            \norm{\slow(x,\xi,\Pi(z),\pr) - \slow(x,\xi^\prime, \Pi(z^\prime), \pr)} &\leq \pr\lip_\slow\norm{\begin{bmatrix} 
                \xi - \xi^\prime
                \\
                z - z^\prime
            \end{bmatrix}}
            \label{eq:slow_condition}
            \\
            \norm{\begin{bmatrix} 
                \fast(x,\xi,\Pi(z),\pr) - \fast(x,\xi^\prime,\Pi(z^\prime),\pr)
                \\
                \mpc(z,x) - \mpc(z^\prime,x)
            \end{bmatrix}}
             &\leq \lip_{\Fast}\norm{\begin{bmatrix} 
                \xi - \xi^\prime
                \\
                z - z^\prime
            \end{bmatrix}}
            \label{eq:fast_condition}
            \\
            \norm{\red(\x, \Pi(\zstar(\x)), \pr) - \x} &\leq \pr\tilde{\lip}_{\slow}\norm{x - \xstar},
            \label{eq:red_condition}
        \end{align}
    \end{subequations}
    for all $x \in \feas$, $\xi, \xi^\prime \in \domXi$, and $z, z^\prime \in \domZ$. \oprocend
\end{lemma}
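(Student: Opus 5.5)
Each of the three bounds comes from the integral representations \eqref{eq:slow_definition}, \eqref{eq:fast_definition}, \eqref{eq:red}, combined with a Gr\"onwall estimate on the flow $\flow$ over the short interval $[0,\pr]$. The key point is that, with $\prct = \ratio\pr$, the fast vector field $\frac{1}{\prct}\fastc$ integrated over $[0,\pr]$ contributes a factor $\pr/\prct = 1/\ratio$, which is independent of $\pr$. The slow field $\slowc$ instead contributes a factor $\pr$. Throughout, write $u = \Pi(z)$ and $u' = \Pi(z')$, so that $\norm{u-u'} \leq \lip_\Pi \norm{z-z'}$ by Assumption~\ref{ass:ges_MPC}.

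\emph{Step 1 (flow sensitivity).} Let $(x(\tint),\xi(\tint))$ and $(x'(\tint),\xi'(\tint))$ be the two flows started from $(x,\xi)$ and $(x,\xi')$ under the constant inputs $u$ and $u'$, and let $e(\tint)$ be the norm of their difference. By Assumption~\ref{ass:slowness},
\begin{align*}
    \dot e \leq \Big(L_{\slowc} + \tfrac{L_{\fastc}}{\ratio\pr}\Big)\big(e + \norm{u-u'}\big).
\end{align*}
Gr\"onwall on $[0,\pr]$ then gives $e(\tint) \leq e^{L_{\slowc}\pr + L_{\fastc}/\ratio}\big(\norm{\xi-\xi'} + \norm{u-u'}\big) =: M\big(\norm{\xi-\xi'}+\norm{u-u'}\big)$. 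For $\pr < \bprsi$ (for instance $\bprsi = 1$), the constant $M$ is bounded uniformly in $\pr$. This uniformity is the crucial step, and it relies exactly on the scaling $\prct = \ratio\pr$: the exponent $\pr/\prct$ stays fixed instead of blowing up.

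\emph{Step 2 (proof of \eqref{eq:slow_condition} and \eqref{eq:fast_condition}).} For the slow map, the $x$ terms cancel, so
\begin{align*}
    \norm{\slow(\cdot)-\slow(\cdot)} \leq \int_0^\pr L_{\slowc}\big(e(\tint)+\norm{u-u'}\big)\,d\tint \leq \pr L_{\slowc}(M+1)(1+\lip_\Pi)\norm{\col(\xi-\xi',z-z')}.
\end{align*}
This yields \eqref{eq:slow_condition} with $\lip_\slow := L_{\slowc}(M+1)(1+\lip_\Pi)$. For the fast map we get $\norm{\xi-\xi'} + \frac{\pr}{\ratio\pr}L_{\fastc}(M+1)(\norm{\xi-\xi'}+\norm{u-u'})$, which is a $\pr$-independent Lipschitz constant. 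Adding the $\lip_\mpc$-Lipschitz bound on $\mpc$ from Assumption~\ref{ass:ges_MPC} gives \eqref{eq:fast_condition} with $\lip_G := 1 + \tfrac{L_{\fastc}}{\ratio}(M+1)(1+\lip_\Pi) + \lip_\mpc$.

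\emph{Step 3 (proof of \eqref{eq:red_condition}).} Note that $\redc$ is Lipschitz with constant $L_R := L_{\slowc}(1+\lipeq)(1+\lip_\Pi)$, because it composes $\slowc$ with the Lipschitz map $\xieq$. Along $x \mapsto (x,\Pi(\zstar(x)))$ it is Lipschitz with constant $L_R(1+\lip_\Pi\lip_{\zstar})$ in $x$. Set $\bar u = \Pi(\zstar(x))$, keep $\bar u$ fixed, and write $y(\tint) = \flow_{\redc}(x,\bar u,\tint)$. By Assumption~\ref{ass:red_ges}, $\redc(\xstar,\Pi(\zstar(\xstar))) = 0$, so
\begin{align*}
    \norm{\redc(y,\bar u)} \leq L_R\norm{y-x} + \norm{\redc(x,\bar u)} \leq L_R\norm{y-x} + L_R(1+\lip_\Pi\lip_{\zstar})\norm{x-\xstar}.
\end{align*}
Gr\"onwall on $\norm{y(\tint)-x}$ gives $\norm{y(\tint)-x} \leq \tint e^{L_R\tint}L_R(1+\lip_\Pi\lip_{\zstar})\norm{x-\xstar}$. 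Integrating $\norm{\redc(y,\bar u)}$ over $[0,\pr]$ then produces $\pr\tilde\lip_\slow\norm{x-\xstar}$, with $\tilde\lip_\slow$ uniform for $\pr<\bprsi$.

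The main obstacle is Step 1: the rate $1/\prct$ blows up as $\pr\to 0$. The coupling $\prct = \ratio\pr$ is what keeps the Gr\"onwall exponent bounded. A minor technical caveat is that the flows must remain in $\domX\times\domXi$, which holds by the stated forward invariance; I will assume the reduced flow likewise stays in the domain where the Lipschitz bounds apply.
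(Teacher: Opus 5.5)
Your proof is correct. For \eqref{eq:slow_condition} and \eqref{eq:fast_condition} it is essentially the paper's argument. The paper also applies Gr\"onwall to the difference of the two full-order flows, using the rate $\sqrt{\lip_{\slowc}^2+\lip_{\fastc}^2/\prct^2}$ rather than your $\lip_{\slowc}+\lip_{\fastc}/\prct$. In both, $\prct=\ratio\pr$ keeps the exponent over $[0,\pr]$ bounded and produces the $\pr$-independent factor $1/\ratio$ in the fast bound.

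For \eqref{eq:red_condition} your route differs. The paper Taylor-expands $\pr\mapsto\red(x,\ustar(x),\pr)$ to second order with integral remainder. It bounds the first-order term $\pr\redc(x,\ustar(x))$ by Lipschitz continuity along the policy, using $\redc(\xstar,\ustar(\xstar))=0$. The remainder involves $\nabla_1\redc(\xcont(\tint),\ustar(x))\,\redc(\xcont(\tint),\ustar(x))$, and the paper controls it via Gr\"onwall on $\norm{\xcont(\tint)-\xstar}$. You instead bound $\int_0^\pr\norm{\redc(y(\tint),\bar u)}d\tint$ directly, with Gr\"onwall on $\norm{y(\tint)-x}$.

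Both routes end with $\tilde{\lip}_{\slow}$ equal to the same kind of Lipschitz bound on $\norm{\redc(x,\Pi(\zstar(x)))}$ in terms of $\norm{x-\xstar}$, inflated by an $O(\bprsi)$ factor. So the Taylor expansion buys nothing in the final estimate. Your version is shorter and needs only Lipschitz continuity. The paper's use of $\nabla_1\redc$ presupposes differentiability of $\redc$, which the assumptions do not guarantee, since $\xieq$ is only Lipschitz. The domain caveat you flag (the reduced flow remaining in $\domX$) is left implicit in the paper too.
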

The proof of Lemma~\ref{lemma:single-int} is provided in Appendix~\ref{sec:proof_lemma_single_int}.

\subsection{Boundary-Layer System Analysis}
\label{sec:bl}
We now focus on the boundary-layer system, i.e., the dynamics obtained by considering an arbitrarily fixed slow state $\xt = x$ for all $\iter \in \N$ in the fast subsystem~\eqref{eq:interconnected_system_xi}-\eqref{eq:interconnected_system_z}.
A graphical representation is shown in Fig.~\ref{fig:bd_boundary_layer}.
\begin{figure}[H]
  \centering
  \includegraphics[scale=\scale]{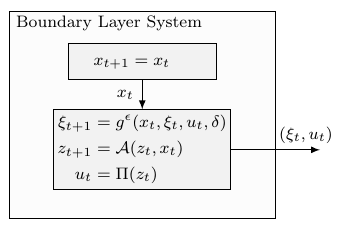}
  \caption{Block diagram representation of the boundary layer system.}
  \label{fig:bd_boundary_layer}
\end{figure}
In order to describe this system, we introduce a set of novel coordinates $(x, \txi, \tz)$ defined as
\begin{align}
    \begin{bmatrix} 
        x 
        \\
        \xi  
        \\
        \z 
    \end{bmatrix} 
    \longmapsto 
    \begin{bmatrix}
        x 
        \\
        \txi
        \\
        \tz
    \end{bmatrix} 
    := 
    \begin{bmatrix} 
        x 
        \\
        \xi - \xieq(x, \Pi(\z))
        \\
        \z - \zstar(x)
    \end{bmatrix}.\label{eq:error_coordinates}
\end{align}

Then, by observing~\eqref{eq:interconnected_system_xi}-\eqref{eq:interconnected_system_z} and recalling that $\xt = x$ for all $\iter \in\N$, the boundary-layer system 
reads as
\begin{subequations}\label{eq:BL}
    \begin{align}
        \txitp &= \tfast(x, \txit, \tzt,\pr) + \Delta\xieq(x, x, \tzt,\tztp)
        \\
        \tztp &= \tmpc(\tzt, x),
    \end{align}
\end{subequations}
where, for the sake of compactness, we introduce %
\begin{subequations}\label{eq:tilde_definitions}
    \begin{align}
        \tfast(x, \txi, \tz, \pr) 
        &:=  - \xieq(x, \Pi(\tz + \zstar(x)))
        +\fast(x, \txi  +  \xieq(x, \Pi(\tz + \zstar(x))), \Pi(\tz  +  \zstar(x)),\pr)
        \label{eq:tfast}
        \\
        \tmpc(\tz, x) &:= \mpc(\tz + \zstar(x), x) - \zstar(x)\label{eq:tmpc}
        \\
        \Delta \xieq(x,x^\prime, \tz,\tz^\prime) &:= \xieq(x, \Pi(\tz + \zstar(x))) 
        %
        - \xieq(x^\prime, \Pi(\tz^\prime + \zstar(x^\prime))).
        \label{eq:delta_xieq}
    \end{align}
\end{subequations}
To formally state global exponential stability of the origin for the boundary-layer system~\eqref{eq:BL}, let us introduce %
\begin{subequations}
    \begin{align}
        U(\txi, \tz) &:= \cG(\txi) + \kappa\cL(\tz)\label{eq:cU_def}
        \\
        \tz\ud{+} &:= \tmpc(\tz, x)
        \label{eq:tz_plus}
        \\
        \txi\ud{+} &:= \tfast(x, \txi, \tz, \pr) + \Delta\xieq(x, x, \tz,\tz\ud{+}),
        \label{eq:txi_plus}
    \end{align}
\end{subequations}
where $\cG$ and $\cL$ are introduced in~\eqref{eq:U} (cf. Assumption~\ref{ass:ges_fast}) and~\eqref{eq:cL} (cf. Assumption~\ref{ass:ges_MPC}), respectively, while $\kappa > 0$ is a constant that will be characterized in the formal result.
\begin{lemma}\label{lemma:bl}
    For all $\bar{\pr}_2, \ratio > 0$, there exist $\bar{\kappa} > 0$ such that, for all $\kappa > \bar{\kappa}$, $\pr \in (0, \bar{\pr}_2)$, and $\prct = \ratio\pr$, it holds
    \begin{subequations}\label{eq:cU}
        \begin{align}
        &b_1 (\snorm{\txi}^2 + \snorm{\tz}^2) \leq U(\txi, \tz) \leq b_2 (\snorm{\txi}^2  +  \snorm{\tz}^2)
        \label{eq:cU_1}
        \\
        &U\big(\txi\ud{+},\tz\ud{+}\big) - U(\txi,\tz) \leq -b_3 (\snorm{\txi}^2 + \snorm{\tz}^2),\label{eq:cU_2}
        \end{align}
    \end{subequations}
    for all $(x,\txi,\tz) \in \feas \times \R^{p} \times \R^{\dimz}$, such that $\txi +\xieq(\x, \Pi(\tz + \zstar(x))) \in \domXi$,  $\tz + \zstar(x) \in \domZ$ and %
 some $b_1, b_2, b_3> 0$. 
    \oprocend
\end{lemma}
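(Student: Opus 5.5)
The plan is to treat the two fast components separately and then combine them through the weight $\kappa$. The quadratic bounds \eqref{eq:cU_1} follow directly from \eqref{eq:U_1} and \eqref{eq:cL_1}, with $b_1 := \min\{a_1, \kappa d_1\}$ and $b_2 := \max\{a_2, \kappa d_2\}$. For the optimizer part, \eqref{eq:tz_plus} and \eqref{eq:tmpc} give $\tz\ud{+} = \mpc(z,x) - \zstar(x)$ with $z = \tz + \zstar(x)$. So \eqref{eq:cL_2} yields $\kappa\cL(\tz\ud{+}) - \kappa\cL(\tz) \leq -\kappa d_3\snorm{\tz}^2$ at once, with no dependence on $\pr$.

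For the extra-dynamics part, I split $\txi\ud{+} = a + b$. Here $a := \fast(x,\xi,\Pi(z),\pr) - \xieq(x,\Pi(z))$ is the error after one sampling interval with respect to the \emph{old} manifold point. The term $b := \Delta\xieq(x,x,\tz,\tz\ud{+})$ accounts for the shift of the manifold caused by the optimizer update.

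To bound $a$ uniformly in $\pr$, I use the boundary-layer viewpoint with $x$ and $u=\Pi(z)$ frozen during the interval, and pass to the dilated time $\dt = \tti/\prct$. Because $\prct = \ratio\pr$, one sampling interval has fixed dilated length $1/\ratio$, independent of $\pr$. This is the mechanism that yields $\pr$-uniform constants. Along $d\xi/d\dt = \fastc(x,\xi,u)$, \eqref{eq:U_1}--\eqref{eq:U_2} give $\tfrac{d}{d\dt}\cG \leq -(a_3/a_2)\cG$. Hence $\cG(a) \leq e^{-a_3/(a_2\ratio)}\cG(\txi) \leq \rho\, a_2 \snorm{\txi}^2$ with $\rho := e^{-a_3/(a_2\ratio)} < 1$, and also $\snorm{a} \leq \sqrt{\rho a_2/a_1}\,\snorm{\txi}$.

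For $b$, the Lipschitz properties of $\xieq$, $\Pi$, $\mpc$ (Assumptions~\ref{ass:ges_fast}, \ref{ass:ges_MPC}), together with $\mpc(\zstar(x),x)=\zstar(x)$, give $\snorm{b} \leq \lipeq\lip_\Pi\snorm{z - \mpc(z,x)} \leq \lipeq\lip_\Pi(1+\lip_\mpc)\snorm{\tz}$.

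Next I combine the two pieces using the Lipschitz gradient of $\cG$. Since $\cG \geq 0$ with $\cG(0)=0$, we have $\nabla\cG(0)=0$ and $\snorm{\nabla\cG(a)} \leq a_4\snorm{a}$. Therefore
\begin{align*}
\cG(a+b) \leq \cG(a) + a_4\snorm{a}\snorm{b} + \tfrac{a_4}{2}\snorm{b}^2 .
\end{align*}
Applying Young's inequality to the cross term with a parameter $\eta$ chosen small enough that $\rho a_2 + \eta a_4\rho a_2/a_1 < a_1$, I expect to obtain
\begin{align*}
\cG(\txi\ud{+}) - \cG(\txi) \leq -c_1\snorm{\txi}^2 + c_2\snorm{\tz}^2 .
\end{align*}
The step $-\cG(\txi) \leq -a_1\snorm{\txi}^2$ requires $\rho a_2 < a_1$, which may fail for large $\ratio$. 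In that case I would apply the argument to $N$ consecutive intervals, or, more simply, replace $\cG$ by the dilated-time contraction estimate in the form $\cG(a)-\cG(\txi) \leq -(1-\rho)\cG(\txi) \leq -(1-\rho)a_1\snorm{\txi}^2$. That form always works. Summing with the $\cL$ part and choosing $\bar\kappa := 2c_2/d_3$ gives \eqref{eq:cU_2} with $b_3 := \min\{c_1, \kappa d_3 - c_2\}$.

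The main obstacle is the bound on $a$. In the genuine sampled map \eqref{eq:fast_definition}, the flow $\flow$ also moves the target state during $[0,\pr)$, so the equilibrium $\xieq(\flow_x(\tti),u)$ drifts. In addition, $\xieq$ is only Lipschitz, so the Lie-derivative argument cannot be applied literally. I would handle this with a Gronwall comparison between the true flow and the frozen-$x$ flow over an interval of dilated length $1/\ratio$. The resulting mismatch is of order $\pr$ and is proportional to $\snorm{\slowc}$, which does not vanish at $(\txi,\tz)=(0,0)$. Consequently, I would state the lemma for the frozen-$x$ boundary layer, as the definition of $\tfast$ suggests, and defer the $O(\pr)$ drift terms (bounded via Lemma~\ref{lemma:single-int}) to the interconnection analysis in the proof of Theorem~\ref{th:main}, where they are dominated by $W$ for small $\pr$.
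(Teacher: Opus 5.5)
Your argument for the frozen-$x$ boundary layer is the paper's argument. It uses the same $U=\cG+\kappa\cL$ and the same splitting of $\txi\ud{+}$ into the nominal step (your $a$, the paper's $\txi\ud{+}\du{\text{\tiny nom}}$) plus the manifold shift $\Delta\xieq(x,x,\tz,\tz\ud{+})$. It also has the same dilated-time contraction factor $1-e^{-a_3/(\ratio a_2)}$ (the paper's $a_5$), the same descent-lemma perturbation bound, and the same estimate $\snorm{\Delta\xieq(x,x,\tz,\tz\ud{+})}\leq\lipeq\lip_{\Pi}(1+\lip_{\mpc})\snorm{\tz}$. Young's inequality replaces the Sylvester test on $H(\kappa)$, which is cosmetic. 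Your detour through $\rho a_2<a_1$ is unnecessary, because the form $-(1-\rho)\cG(\txi)$ is exactly what the paper uses. The one real difference is that you bound $\snorm{a}\leq\sqrt{\rho a_2/a_1}\,\snorm{\txi}$ from the Lyapunov contraction itself. The paper instead uses a Lipschitz constant $\lip_{\fast}$ together with the identity $\fast(x,\xieq(x,u),u,\pr)=\xieq(x,u)$ (cf.~\eqref{eq:bound_norm_eta}). Your route reuses an estimate you already have and gives a $\bar{\kappa}$ independent of $\bar{\pr}_2$.

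The obstacle you flag is genuine, and the paper's proof does not resolve it either. The paper writes the nominal dynamics as $\prct\dot{\txi}=\fastc(x,\txi+\xieq(x,\Pi(z)),\Pi(z))$ with $x$ held constant, and it invokes the identity above. Both steps are valid only for the frozen-$x$ flow, whereas \eqref{eq:fast_definition}, and hence \eqref{eq:tfast}, integrates along $\flow$, in which $x$ moves. The right-hand side of \eqref{eq:cU_2} vanishes at $(\txi,\tz)=(0,0)$, but under the true flow $\txi\ud{+}\neq 0$ there. For \eqref{eq:example_ct} one finds $\txi\ud{+}=\alpha\pr\cos(x)\,\Pi(\zstar(x))\,(1-\ratio(1-e^{-1/\ratio}))+O(\pr^2)$. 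So the inequality is false for the literal $\fast$, and restricting to the frozen-$x$ map is necessary, not just convenient.

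Two small corrections. First, it is the paper's proof, not the definition of $\tfast$, that freezes $x$. Second, the Lipschitz-only regularity of $\xieq$ is not an obstruction: $\tti\mapsto\xieq(x(\tti),u)$ is absolutely continuous, so the Lyapunov computation goes through almost everywhere and simply produces the drift term.

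For the deferral, note that Theorem~\ref{th:theorem_generic} assumes the exact fixed point \eqref{eq:fast_equilibrium} and the exact decrease \eqref{eq:U_minus_generic} for the true map. Both fail for the same reason, and the former is not checked in the proof of Theorem~\ref{th:main}. That theorem therefore has to be re-proved with a perturbation term. The mismatch comes from your Gronwall comparison, not from Lemma~\ref{lemma:single-int}, which does not bound it. It is of size $O(\pr)(\snorm{x-\xstar}+\snorm{\txi}+\snorm{\tz})$ and produces terms of type $\pr\snorm{x-\xstar}\snorm{\ty}$, $\pr\snorm{\ty}^2$, and $\pr^2\snorm{x-\xstar}^2$. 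These are exactly the kinds the Sylvester argument there already absorbs for small $\pr$, so your plan is sound.
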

The proof of Lemma~\ref{lemma:bl} is provided in Appendix~\ref{sec:bl_proof}.

\subsection{Reduced System Analysis}
\label{sec:reduced}

As graphically depicted in Fig.~\ref{fig:bd_reduced_system}, the reduced system corresponds to the slow subsystem~\eqref{eq:interconnected_system_x} studied by considering the fast state $(\xit, \zt)$ in its equilibrium manifold, i.e., $\xit = \xieq(\xt, \Pi(\zt))$ and $\zt = \zstar(\xt)$ for all $\iter \in \N$.
\begin{figure}[H]
  \centering
  \includegraphics[scale=\scale]{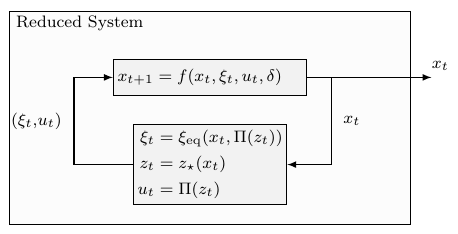}
  \caption{Block diagram representation of the reduced system.}
  \label{fig:bd_reduced_system}
\end{figure}
Then, the reduced system can be compactly written as
\begin{align}\label{eq:redcuced_system}
    \xtp = \red(\xt, \Pi(\zstar(\xt)), \pr),
\end{align}
in which we recall that $\red$ is defined in~\eqref{eq:red}.
where $\flow_{\redc}(x,u,\tint)$ is the solution at time $\tint$ to $\dot{x}(\tint) = \redc(x(\tint), u)$ initialized at $\flow_{\redc}(x,u,0) = x$ and subject to the constant input $u(\tint) = u$ for all $\tint \in [0, \pr)$.
As we formally show in the next lemma, the Lyapunov function $W$ characterized in Assumption~\ref{ass:red_ges} can be suitably used to establish exponential stability of $\xstar$ for system~\eqref{eq:redcuced_system}.
\begin{lemma}\label{lem:rs}
    For all $\gamma \in (0,\gammam)$ and $\tilde{c}_3 \in (0,c_3)$, there exists $\bar{\pr}_3 > 0$ such that, for all $\pr \in (0,\bar{\pr}_3)$, it holds 
    \begin{align*}
        W(\red(\x, \Pi(\zstar(\x)), \pr)) - W(x) \leq  - \pr \tilde{c}_3\norm{x - \xstar}^2, 
    \end{align*}
    for all $x \in \Omega_W(\gamma)$. \oprocend
\end{lemma}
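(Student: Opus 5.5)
The plan is to compare the one-step increment of $W$ along the sampled reduced map $\red(\cdot,\Pi(\zstar(\cdot)),\pr)$ with the continuous-time Lie derivative from Assumption~\ref{ass:red_ges}, and to treat the difference as an $O(\pr^2)$ remainder.

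Fix $x\in\Omega_W(\gamma)$ and set $u:=\Pi(\zstar(x))$, $x^+:=\red(x,u,\pr)$. I will use the descent lemma, which follows from the $c_4$-Lipschitz continuity of $\nabla W$:
\begin{align*}
W(x^+)-W(x)\le \nabla W(x)^\top (x^+-x)+\tfrac{c_4}{2}\norm{x^+-x}^2 .
\end{align*}
This inequality needs the segment between $x$ and $x^+$ to lie in the domain $\feas$ of $W$. Securing that is where $\gamma<\gammam$ enters.

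\emph{Step 1 (staying in the domain).} $\Omega_W(\gamma)$ is compact, because $W\ge c_1\norm{x-\xstar}^2$. By~\eqref{eq:red_condition}, $\norm{x^+-x}\le \pr\tilde{\lip}_{\slow}\norm{x-\xstar}\le \pr\tilde{\lip}_{\slow}\sqrt{\gamma/c_1}$. Pick any $\gamma'\in(\gamma,\gammam)$; then $\Omega_W(\gamma')\subseteq\feas$. The quadratic bounds on $W$ give that $\Omega_W(\gamma)$ sits at positive distance from the complement of $\Omega_W(\gamma')$, at least inside the region where $W$ is defined. So for $\pr$ small the whole segment lies in $\Omega_W(\gamma')\subseteq\feas$. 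I expect this to be the main technical obstacle. $W$ is only defined on $\feas$, so the distance argument has to be phrased carefully. What I would try first is continuity of $W$ together with a contradiction argument on the compact set $\Omega_W(\gamma)$. The same argument guarantees that the continuous flow $\flow_{\redc}(x,u,\tint)$, $\tint\in[0,\pr]$, stays in $\feas$ and obeys Lipschitz bounds. Assumption~\ref{ass:slowness} together with Lipschitzness of $\xieq$, $\Pi$ and $\zstar$ makes $\redc$ Lipschitz with some constant $\lip_R$.

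\emph{Step 2 (linear term).} Write $x^+-x=\int_0^\pr \redc(\flow_{\redc}(x,u,\tint),u)\,d\tint$ and split it as
\begin{align*}
\pr\,\redc(x,u)+\int_0^\pr\big[\redc(\flow_{\redc}(x,u,\tint),u)-\redc(x,u)\big]d\tint .
\end{align*}
Equation~\eqref{eq:descent_property_mpc} gives $\pr\nabla W(x)^\top\redc(x,u)\le -\pr c_3\norm{x-\xstar}^2$. For the remainder, Gr\"onwall together with $\norm{\redc(x,u)}\le \lip_R'\norm{x-\xstar}$ yields $\norm{\flow_{\redc}(x,u,\tint)-x}\le \tint\, C\norm{x-\xstar}$. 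The bound on $\redc(x,u)$ uses $\redc(\xstar,\Pi(\zstar(\xstar)))=0$ and Lipschitzness of $x\mapsto\redc(x,\Pi(\zstar(x)))$. Hence the integral is at most $\pr^2 C'\norm{x-\xstar}$. Also $\norm{\nabla W(x)}\le c_4\norm{x-\xstar}$, since $\nabla W(\xstar)=0$ because $\xstar$ is a minimizer of $W$.

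\emph{Step 3 (assemble).} Combining the pieces,
\begin{align*}
W(x^+)-W(x)\le -\pr c_3\norm{x-\xstar}^2+\pr^2\big(c_4C'+\tfrac{c_4}{2}\tilde{\lip}_{\slow}^2\big)\norm{x-\xstar}^2 .
\end{align*}
Choose $\bar{\pr}_3$ no larger than the bound from Step~1 and $\bprsi$, and also with $\bar{\pr}_3\big(c_4C'+\tfrac{c_4}{2}\tilde{\lip}_{\slow}^2\big)\le c_3-\tilde{c}_3$. This gives the claim for all $\pr\in(0,\bar{\pr}_3)$.
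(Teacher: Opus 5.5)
Your proof is correct, but it follows a genuinely different route from the paper's.

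\textbf{The paper's route.} It never uses a discrete descent inequality. It integrates $\nabla W(x(s))^\top\redc(x(s),\Pi(\zstar(x)))$ along the continuous reduced trajectory. It then adds and subtracts the closed-loop field $\redc(x(s),\Pi(\zstar(x(s))))$, so that \eqref{eq:descent_property_mpc} is invoked at every point $x(s)$. The resulting frozen-input mismatch is bounded by $c_4 c_B\norm{x(s)-\xstar}\norm{x(s)-x}$. This forces Gr\"onwall-type upper \emph{and lower} bounds on $\norm{x(s)-\xstar}$, the latter requiring $\pr K_2(\pr)<1$. Membership $x(s)\in\feas$ is needed both for $\zstar(x(s))$ to make sense and for the descent property. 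The paper secures it by an exit-time contradiction.

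\textbf{Your route.} You apply \eqref{eq:descent_property_mpc} only once, at the initial point $x$, where the frozen input $\Pi(\zstar(x))$ coincides with the feedback value. All inter-sample effects go into an $O(\pr^2)\norm{x-\xstar}^2$ term. You obtain it from the quadratic upper bound implied by the $c_4$-Lipschitz gradient together with \eqref{eq:red_condition} from Lemma~\ref{lemma:single-int}.

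\textbf{What each buys.} Your version is shorter and gives an explicit threshold. It needs no lower bound on $\norm{x(s)-\xstar}$. It also does not need the reduced trajectory to stay in $\feas$, so the remark in your Step~1 about the flow is unnecessary for your argument. The price is that the descent inequality must hold on the chord $[x,x^+]$ rather than along the trajectory. The paper's route, in exchange, also shows as a by-product that the whole inter-sample trajectory remains in the sublevel set.

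\textbf{A correction to Step~1.} The quadratic bounds alone do not give a positive distance between $\Omega_W(\gamma)$ and $\{W>\gamma'\}$. What gives it is continuity of $W$ plus compactness of $\Omega_W(\gamma)$, i.e.\ your contradiction argument. This step, like the paper's exit-time argument and its $\rho_{\text{max}}$ construction in the proof of Theorem~\ref{th:main}, tacitly requires $W$ to be defined and continuous on a neighborhood of $\Omega_W(\gamma')$. Only then must a chord leaving $\feas$ first cross the level $\gamma'$. Under that reading, which the paper also relies on, your proof goes through.
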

The proof of Lemma~\ref{lem:rs} is provided in Appendix~\ref{sec:proof_lemma_rs}.

\subsection{Proof of Theorem~\ref{th:main}}
\label{sec:proof}

The proof of Theorem~\ref{th:main} merges the results achieved in Sections~\ref{sec:bl} and~\ref{sec:reduced} to invoke the generic timescale separation result formalized in Theorem~\ref{th:theorem_generic} (cf. Appendix~\ref{sec:generic}).
To apply it, we must verify that its requirements are satisfied.
First, the Lipschitz continuity of the equilibrium map (cf.~\eqref{eq:lipschitz_eq}) is implied by the Lipschitz continuity of $\xieq$, $\Pi$, and $\zstar$ stated in Assumptions~\ref{ass:ges_fast} and~\ref{ass:ges_MPC}. 
Further, by Lemma~\ref{lemma:single-int}, for all $\ratio > 0$, there exists $\bar{\pr}_1 > 0$ such that the conditions required in~\eqref{eq:lipschitz} are ensured for all $\pr \in (0, \bar{\pr}_1)$ and $\prct = \ratio\pr$.
Regarding the stability requirements, conditions in~\eqref{eq:U_generic} demand a Lyapunov function proving the exponential stability of the origin for the boundary-layer system \eqref{eq:BL}.
By Lemma~\ref{lemma:bl}, this requirement is satisfied by the function $U$ defined in~\eqref{eq:cU_def} 
for all $\ratio, \bar{\pr}_2 > 0$, provided that $\pr \in (0,\bar{\pr}_2)$, $\prct = \ratio\pr$, and $\kappa \geq \bar{\kappa}$.
Moreover, in light of Lemma~\ref{lem:rs}, there exists $\bar{\pr}_3 > 0$ such that, for all $\pr \in (0,\bar{\pr}_3)$, the reduced-system conditions in~\eqref{eq:W_generic} are satisfied in $\feas$ by the function $W$ characterized in Assumption~\ref{ass:red_ges}. %
Finally, we need to guarantee the condition~\eqref{eq:one_step}.
To this end, 
let us arbitrarily choose $\gamma \in (0, \gammam)$.
    Then, for all $(x,\xi,z) \in \Omega_V(\gamma)$, we can add and subtract $\red(x,\Pi(\zstar(x)),\pr) = \slow(x,\xieq(x,\Pi(\zstar(x))),\Pi(\zstar(x)),\pr)$ (cf.~\eqref{eq:red}) in the term $\norm{\slow(x,\xi,\Pi(z),\pr) - x}$, and use the triangle inequality to write 
    \begin{align}
        \norm{\slow(x,\xi,\Pi(z),\pr) - x} 
        &\leq
            \norm{\red(x,\Pi(\zstar(x)),\pr) - x}
            + \norm{\slow(x,\xi,\Pi(z),\pr) - \slow(x,\xieq(x,\Pi(\zstar(x))),\Pi(\zstar(x)),\pr)}
        \notag\\
        &\stackrel{(a)}{\leq} \pr\tilde{\lip}_{\slow}\norm{x - \xstar} +
        \pr\lip_\slow\norm{\col(
                \xi - \xieq(x, \Pi(z)),
                z - \zstar(x))}
        \notag\\
        &\stackrel{(b)}{\leq} 
        \pr\lip_\slow\sqrt{\gamma/\min\{c_1,a_1, \kappa d_1\}},
        \label{eq:new_bound}
    \end{align}
    where in $(a)$ we impose $\pr \in (0, \bprsi)$ to use~\eqref{eq:slow_condition} and~\eqref{eq:red_condition} from Lemma~\ref{lemma:single-int}, while $(b)$ follows from the bounds in~\eqref{eq:quadratic_bounds_mpc},~\eqref{eq:U_1}, and~\eqref{eq:cL_1}.
    Now, let us arbitrarily choose $\bar{\gamma} \in (\gamma, \gammam)$ and define 
    \begin{subequations}
        \begin{align}
            \bar{\Omega}_W(\bar{\gamma}) &:= \{x \in \feas \mid W(x) > \bar{\gamma}\}
            \\
            \label{eq:rho_max}
            \rho_{{\text{max}}}  &:= 
            \inf_{
                \substack{x \in \Omega_W(\gamma)
                \\
                \bar{x} \in \bar{\Omega}_W(\bar{\gamma})}}
                \norm{x - \bar{x}}.
            \end{align}
    \end{subequations}
    Then, we arbitrarily choose $\rho \in (0, \rho_{{\text{max}}})$, define $\bar{\pr}_4 := \min\{\bprsi,\frac{\rho}{\lip_\slow}\sqrt{\min\{c_1,a_1, \kappa d_1\}/\gamma}\}$, and
    use~\eqref{eq:new_bound} to get 
    \begin{align}\label{eq:rho}
        \norm{\slow(x,\xi,\Pi(z),\pr) - x} < \rho,
    \end{align} 
    for all $\pr \in (0, \bar{\pr}_4)$ and $(x,\xi,z) \in \Omega_V(\gamma)$.
    By combining~\eqref{eq:rho} with~\eqref{eq:rho_max}, we have $\slow(x,\xi,\Pi(z),\pr) \in \Omega_W(\bar{\gamma}) \subseteq \feas$ for all $\pr \in (0, \bar{\pr}_4)$ and $(x,\xi,z) \in \Omega_V(\gamma)$.
Thus, all the requirements of Theorem~\ref{th:theorem_generic} are satisfied and we can apply Theorem~\ref{th:theorem_generic} in $\Omega_V(\gamma)$.
To state the result, we observe that the definition of $\gammam$ (cf.~\eqref{eq:gammam})
leads to
\begin{align*}
    (x,\xi,z) \in \Omega_{V}(\gamma) \implies x \in \feas.
\end{align*}
Hence, by Theorem~\ref{th:theorem_generic}, for all $\ratio > 0$ and $\gamma \in (0,\gammam)$, there exists $\bar{\pr} \in (0,\min\{\bar{\pr}_1,\bar{\pr}_2,\bar{\pr}_3\})$ such that, for all $\pr \in (0,\bar{\pr})$, $\prct = \ratio\pr$, $(x,\xi,z) \in \Omega_{V}(\gamma)$, and by denoting with $(\x\ud{+},\xi\ud{+},z\ud{+})$ the corresponding next state obtained according to dynamics~\eqref{eq:interconnected_sys}, the increment $\Delta V(x,\xi,z) := V(\x\ud{+}, \xi\ud{+},z\ud{+}) - V(x, \xi, z)$ of $V$ along the trajectories of system~\eqref{eq:interconnected_sys} is upper bounded by 
\begin{align}
    \Delta V(x,\xi, z )
    &
    \leq - \lambda\norm{x - \xstar}^2 - \lambda\snorm{\xi - \xieq(x,\Pi(z))}^2
    -\lambda\norm{z - \zstar(x)}^2,\label{eq:invariance_V}
\end{align}
for some constant $\lambda > 0$.
By combining the invariance property~\eqref{eq:invariance_V} with the definition $V$ (cf.~\eqref{eq:V_pre_thm}), 
we obtain
\begin{align}\label{eq:tV_plus}
    W(\x\ud{+}) \leq \gamma.
\end{align} 
Hence, by definition of $\gammam$ (cf.~\eqref{eq:gammam}), recursive feasibility is achieved. %
Finally, we note that~\eqref{eq:invariance_V} and~\eqref{eq:tV_plus} ensure that the origin is an exponentially stable equilibrium of~\eqref{eq:interconnected_sys} (see, e.g.,~\cite[Theorem~13.2]{chellaboina2008nonlinear}) in $\Omega_{V}(\gamma)$. %

\section{Virtual Experiments}
\label{sec:simulations}

To validate \algo/, we present virtual experiments on the PenduBot system described in Section~\ref{sec:example}. The physical parameters are reported in Table~\ref{tab:pendubot_parameters}. Note that the two links have identical mechanical parameters.
\begin{table}[H]
    \centering
    \caption{Two-Link Robot Arm Parameters.}
    \label{tab:pendubot_parameters}
    \small

    \vspace{2pt}
    
    \textbf{Mechanical Parameters} \\[1pt]
    \begin{tabular}{ccccc} 
        \toprule
        $m$ [kg] & $J$ [kg m$^2$] & $l$ [m] & $l_{c}$ [m] & $b$ [Nm s] \\
        1.0 & 0.083 & 1.0 & 0.5 & 2.0 \\
        \bottomrule
    \end{tabular}
    
    \vspace{2pt}
    
    \textbf{Electrical Parameters (Joint 1)} \\[1pt]
    \setlength{\tabcolsep}{10pt}
    \begin{tabular}{cccc} 
        \toprule
        $R$ [$\Omega$] & $L$ [H] & $K_t$ [$\frac{\text{Nm}}{\text{A}}$] & $K_e$ [$\frac{\text{Vs}}{\text{rad}}$] \\
        0.6 & $\prct$ & 0.4 & 0.4 \\
        \bottomrule
    \end{tabular}
\end{table}
\begin{figure}[htpb]
    \centering
    \includegraphics[width=0.9\columnwidth]{figs/mujoco_snaps.pdf}
    \caption{Sequential snapshots of the PenduBot swing-up maneuver controlled via \algo/ in MuJoCo. Previous states are shown as transparent overlays.}
    \label{fig:mujoco_snaps}
\end{figure}
Consistently with the approach outlined in the previous sections, the optimizer is only aware of the reduced-order model $\redc: \R^{4} \times \R \to \R^{4}$ obtained by considering~\eqref{eq:pendubot_dynamics_slow} with the armature current $\xi$ lying on the equilibrium manifold $\xieq(x,u) = (u - K_e x_{3}) / R$ (see~\eqref{eq:manifold_ex}).
As for the finite-horizon optimal control problem \eqref{eq:MPC_problem}, $\redd$ (namely, the discrete-time approximation of $\redc$) is obtained via a fourth-order Runge-Kutta discretization with sampling time $\prm=0.07$ s of the reduced-order model $\redc$, while the cost functions are defined as
\begin{align*}
    \cost(x,u) &= (x-x_{\text{ref}})^\top \cQ (x-x_{\text{ref}}) + (u- u_{\text{ref}})^\top \cR (u - u_{\text{ref}}), \\ 
    \costf(x) &= (x-x_{\text{ref}})^{\top} \cQ_f (x-x_{\text{ref}}),
\end{align*}
with $\cQ = \cQ_f = \diag(100, 100, 0.1, 0.1)$ and $\cR = 0.1$.
The prediction horizon is set to $\hor = \lceil \hor_s / \prm \rceil$, where $\lceil \cdot \rceil$ denotes the ceiling function, $\hor_s = 3$ s, and the input is subject to the constraint $\mathcal{U} = \{u \in \R \mid |u| \leq 24\}$. 
The MPC problem is suboptimally addressed through a real-time iteration (RTI) scheme~\cite{diehl2005real}. 
Specifically, we perform a single iteration of a Sequential Quadratic Programming (SQP) method per sampling instant.
All experiments are carried out using the MuJoCo physics engine~\cite{todorov2012mujoco}, which simulates the continuous-time plant dynamics with high-fidelity over each sampling interval. 
The PenduBot swing-up maneuver obtained in MuJoCo is illustrated in Fig.~\ref{fig:mujoco_snaps}.
The algorithmic framework is implemented in the CasADi toolkit~\cite{andersson2019casadi}. %

\subsection{Closed-Loop Performance Analysis}\label{sec:closed_loop_performance}

We evaluate the proposed approach (\emph{SMART-MPC - Reduced Prediction})
where the MPC optimizer relies on a reduced-order prediction model while interacting with the full-order plant dynamics, against two additional benchmark settings. 
In these two benchmarks, the plant dynamics are artificially reduced to perfectly match the prediction model, eliminating unmodeled effects to allow the optimizer to fully capture the system behavior.
The first benchmark solves the MPC problem suboptimally through the same RTI scheme used in our approach (\emph{Suboptimal MPC - Exact Prediction}), whereas the second computes the exact optimal solution at each iteration using a standard SQP method (\emph{Optimal MPC - Exact Prediction}).
The control task requires the PenduBot to perform a counterclockwise swing-up maneuver, starting from the downward resting position $\x_0 = [0, 0, 0, 0]^\top$ with the motor at rest $\xi_0 = 0$, and reaching the upright equilibrium configuration $\xstar = [\pi, 0, 0, 0]^\top$.
We evaluate the discussed strategies under two distinct scenarios to study how the interplay between the sampling time $\pr$ and the tuning parameter $\prct$ affects the closed-loop performance.
As established in Theorem~\ref{th:main}, a sufficiently small sampling interval and fast extra dynamics are required. 
Indeed, as shown in Fig.~\ref{fig:SQP_all_work}, where $\pr = 0.005$ s and $\prct = 0.005$, the proposed architecture effectively matches the two ideal benchmarks, ensuring excellent closed-loop performance.
In contrast, operating at a slower sampling rate of $\pr = 0.07$ s and with slower-extra dynamics ($\prct = 0.05$) (cf. Fig.~\ref{fig:SQP_one_broken}) leads to a noticeable performance degradation and oscillatory behavior.
\begin{figure}[H]
    \centering
    \includegraphics[width=0.9\columnwidth]{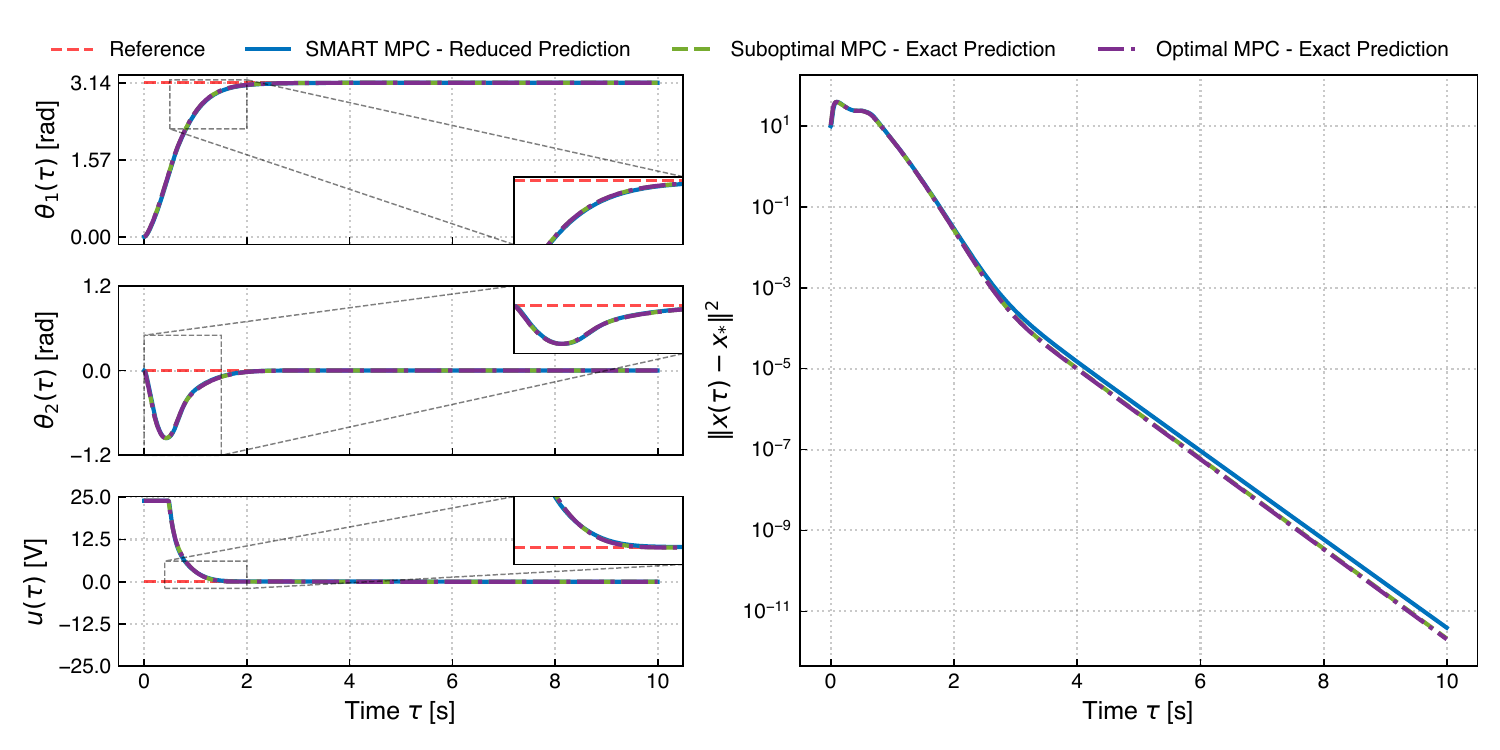}
    \caption{Closed-loop trajectories of the target states $\theta_1, \theta_2$ and control input $u$ (left) and state error squared norm in semilogarithmic scale (right) for $\pr = 0.005$ s.}
    \label{fig:SQP_all_work}
\end{figure}
\begin{figure}[H]
    \centering
    \includegraphics[width=0.9\columnwidth]{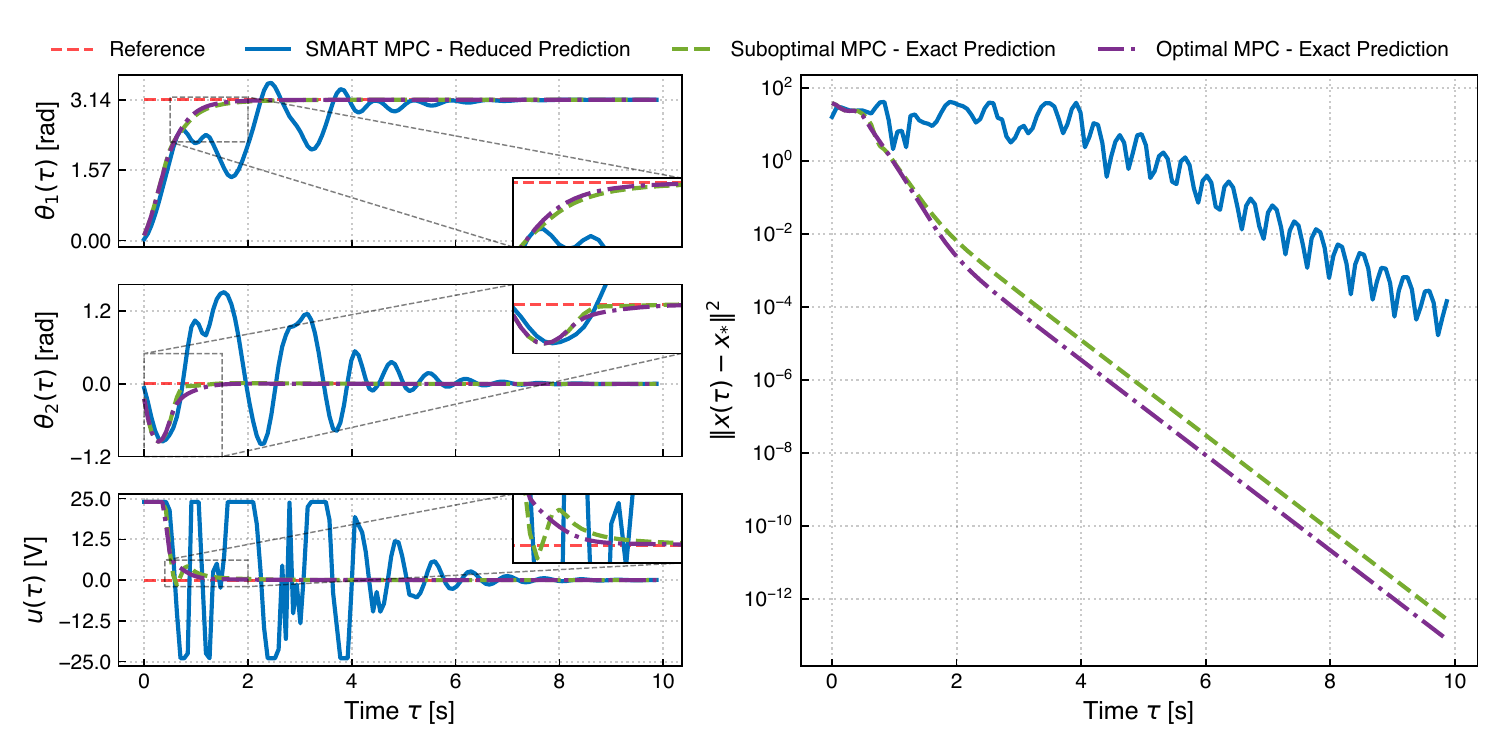}
    \caption{Closed-loop trajectories of the target states $\theta_1, \theta_2$ and control input $u$ (left) and state error squared norm in semilogarithmic scale (right) for $\pr = 0.07$ s.}
    \label{fig:SQP_one_broken}
\end{figure}
This confirms that sufficient timescale separation is necessary to utilize suboptimal MPC schemes and neglect fast unmodeled dynamics while maintaining closed-loop stability and performance guarantees.

\subsection{Tests Under Varying Conditions}

In this section, we empirically investigate the robustness of the proposed approach against two distinct sources of variability: variations in the extra subsystem dynamics and in the initial conditions. 
As in the virtual experiments presented in Section~\ref{sec:closed_loop_performance}, the PenduBot is required to reach and maintain the upright equilibrium.
First, we analyze the effect of the inductance $L$, acting, as discussed, as the tuning parameter $\prct$.
In particular, since $\prct = \ratio\pr$, by varying the value of $\ratio$, we test the controller performance as the gap between the target and the extra dynamics narrows, while keeping the sampling time $\pr$ fixed.
\begin{figure}[H]
    \centering
    \includegraphics[width=0.9\columnwidth]{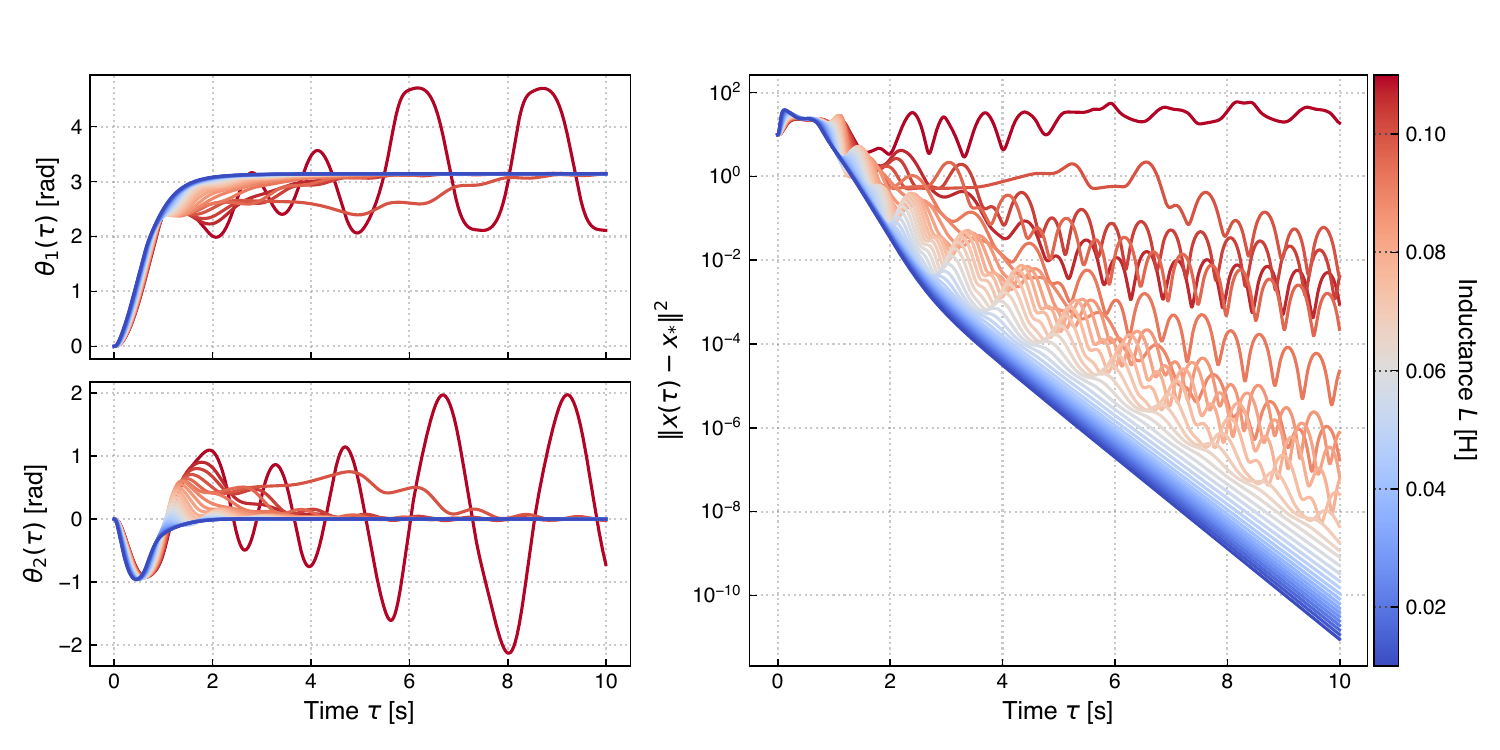}
    \caption{Closed-loop trajectories of the target states $\theta_1$, $\theta_2$ (left) and state error squared norm in semilogarithmic scale (right) for increasing values of the inductance $L$.}
    \label{fig:changing_L_trajs_errors}
\end{figure}
In Fig. \ref{fig:changing_L_trajs_errors}, the closed-loop performance of the proposed approach is evaluated for different values of $\ratio$. Specifically, 30 equally spaced values of $\ratio$ are considered ranging from $\ratio=2$ to $\ratio=22$.
The sampling time is fixed to $\pr = 0.005$ s.
This results in a range of values for the tuning parameter $\prct$ (and, thus, for the inductance $L$) spanning from $\prct = 0.010$ to $\prct = 0.110$.
The results highlight two distinct behaviors: as shown by the progressively darker blue trajectories, smaller inductance values ensure a clear timescale separation. 
In this regime, the reduced-order model provides good approximations and, thus, the controller yields fast and smooth convergence to the swing-up position.
By contrast, as the color shifts toward red with increasing values of $L$, the electrical dynamics slow down.
This leads to the emergence of unmodeled dynamics, visible as progressively pronounced oscillations in the state trajectories, resulting in a degradation of the closed-loop performance, up to failure of convergence for the largest considered values of $L$.
\\
Subsequently, we perform a Monte Carlo analysis to test the robustness of the proposed approach against variations in the initial conditions. 
Specifically, we perform $30$ trials in which we randomly initialize the system states $\theta_{1,0}$, $\theta_{2,0}$, $\omega_{1,0}$, and $\omega_{2,0}$ drawing them from a uniform distribution over the ranges $[-1.2, +1.2]$ rad, $[-0.25, +0.25]$ rad, $[-0.25, 0.25]$ rad/s, and $[-0.25, 0.25]$ rad/s, respectively.
We set $\pr = 0.005$ s, while the prediction model is obtained via a fourth-order Runge-Kutta discretization with sampling time $\prm = 0.05$.
As depicted in Fig.~\ref{fig:montecarlo_x0}, the controller successfully stabilized the PenduBot at the upright equilibrium in all trials. 
\begin{figure}[H]
    \centering
    \includegraphics[width=0.9\columnwidth]{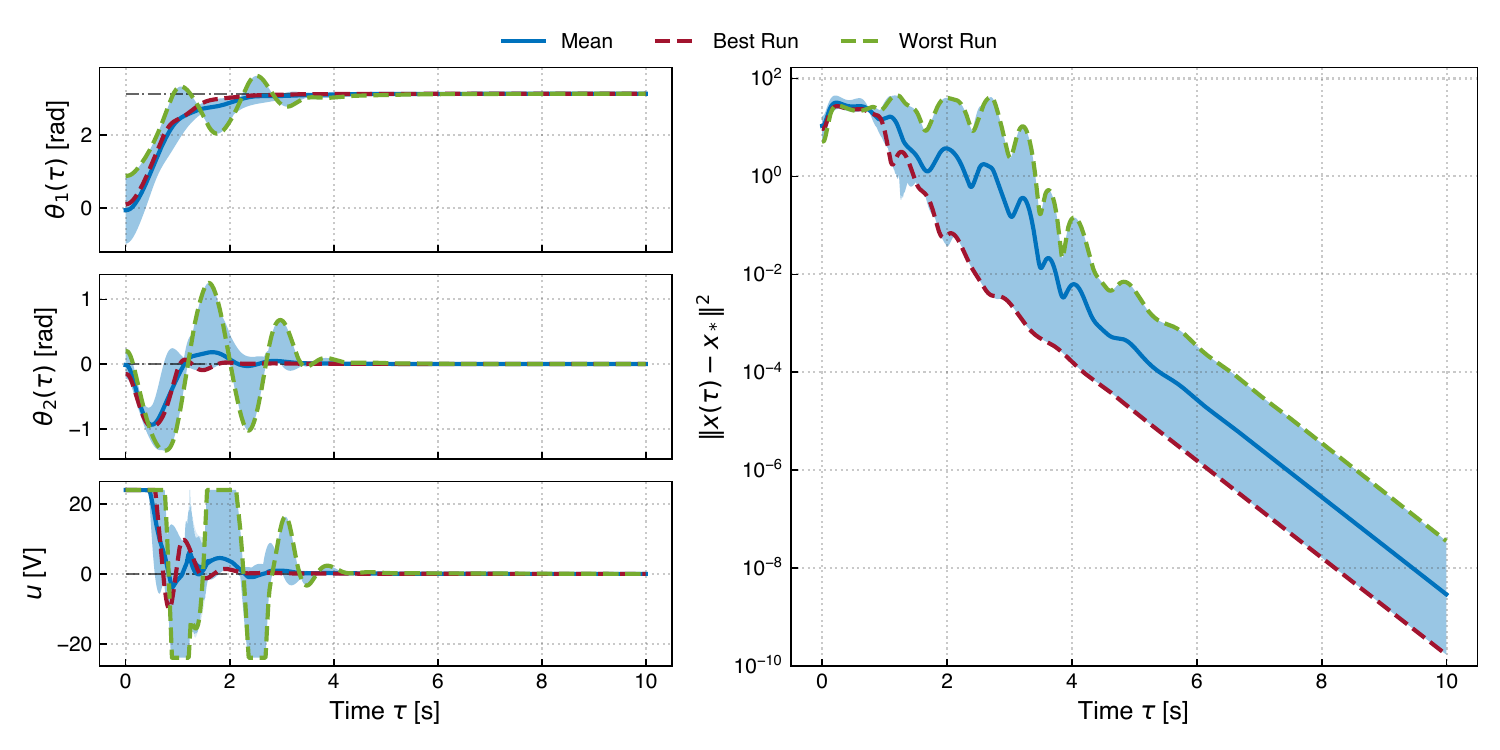}
    \caption{Closed-loop trajectories of the target states $\theta_1, \theta_2$ and control input $u$ (left) and state error squared norm in semilogarithmic scale (right) for randomly generated initial conditions. The shaded regions indicate the envelope spanned by the minimum and maximum values across all realizations while the solid lines represent the average behavior, and the best and worst-case trajectories.}
    \label{fig:montecarlo_x0}
\end{figure}

\section{Conclusions}
\label{sec:conclusions}
We proposed a suboptimal and reduced-order MPC scheme for sampled-data control of continuous-time interconnected systems.
The approach combines a suboptimal optimizer with a reduced-order prediction model, obtained by neglecting part of the plant dynamics, enabling a lighter computational burden. We provided formal guarantees on both recursive feasibility and closed-loop stability by properly tuning the plant sampling time.
Virtual experiments on an underactuated two-link robotic arm, including a Monte Carlo analysis, confirmed the effectiveness of our approach.

\appendix
\subsection{Exponential Stability of Two-Time-Scales Systems}
\label{sec:generic}

The following result provides sufficient conditions for the exponential stability of two-time-scale discrete-time systems.
It is an extension of~\cite[Thm.~II.5]{carnevale2022tracking}, since here we allow for (i) generic slow dynamics in place of the specific form $\xtp = \xt + \pr \slow(\xt, \wt)$ considered there (ii) and local stability results in place of global ones.
\begin{theorem}
    \label{th:theorem_generic}
	Consider the system
	\begin{subequations}\label{eq:interconnected_system_generic}
		\begin{align}
			\xtp &= \slow(\xt,\yt,\pr)\label{eq:slow_system_generic}
			\\
			\ytp &= \Fast(\xt, \yt,\pr),\label{eq:fast_system_generic}
		\end{align}
	\end{subequations}	
	with $\xt \in \domX \subseteq \R^n$, $\yt \in \domY \subseteq \R\du{m}$, $\map{\slow}{\domX \times \domY \times \R_+}{\domX}$, $\map{\Fast}{\domX \times \domY \times\R_+}{\domY}$, and $\pr > 0$.
    Assume that there exist $\domX_0 \subseteq \domX$, $\eq: \domX_0 \to \domY$, and $\lip_{\eq} >0$ such that
    \begin{subequations}\label{eq:generic_assumptions_equilibrium}
        \begin{align}
            \eq(\x) &= \Fast(\x, \eq(\x), \pr)\label{eq:fast_equilibrium}
            \\
            \norm{\eq(\x) - \eq(x^\prime)} &\leq \lip_{\eq} \norm{\x - \x^\prime},\label{eq:lipschitz_eq}
        \end{align}
    \end{subequations}
    for all $\x, \x^\prime   \in  \domX_0$.
    Assume also that there exists $\xstar \in \domX_0$ such that $\xstar = \slow(\xstar, \eq(\xstar),\pr)$ for all $\pr > 0$. 
    Then, let
	\begin{equation}\label{eq:redcuced_system_generic}
		\xtp = \slow(\xt,\eq(\xt),\pr)
	\end{equation}
	be the \emph{reduced system} and
	\begin{equation}\label{eq:boundary_layer_system_generic}
		\psitp = \Fast(\x, \psit + \eq(\x), \pr) - \eq(\x)
	\end{equation}
	be the \emph{boundary layer system}.

    Assume that there exist $L_{\slow}, L_{\Fast}, \bar{\pr}_1 > 0$ such that for all $\pr \in (0,\bar{\pr}_1)$, it holds
    \begin{subequations}\label{eq:lipschitz}
        \begin{align}
            \norm{\slow(\x,\y,\pr) - \slow(\x,\y^\prime,\pr)} 
            &\leq \pr L_{\slow} \norm{\y - \y^\prime} \label{eq:lipschitz_generic_slow}
            \\
            \norm{\Fast(\x,\y,\pr)-\Fast(\x,\y^\prime,\pr)} 
            &\leq L_{\Fast} \norm{\y-\y^\prime}\label{eq:lipschitz_generic_fast}
            \\
            \norm{\slow(\x,\eq(\x),\pr) - \x}
            &\leq \pr \tilde{L}_{\slow} \norm{\x - \xstar},
            \label{eq:single_integrator_like} 
        \end{align}
    \end{subequations}
    for all $\x \in \domX_0$, $\y, \y^\prime \in \domY$.
	Further, assume there exist a continuously differentiable function $U$ and $\bar{\pr}_2 > 0$ such that, for all $\pr \in (0,\bar{\pr}_2)$, there exist $b_1, b_2, b_3 > 0$ such that for all $\x \in \domX_0$, $\psi,  \psi^\prime$ such that $\psi + \eq(\x) \in \domY$, it holds
	\begin{subequations}\label{eq:U_generic}
		\begin{align}
			&b_1 \norm{ \psi}^2 \leq U( \psi) \leq b_2\norm{ \psi}^2 \label{eq:U_first_bound_generic}
			\\
			&U(\Fast(\x, \psi  + \eq(\x), \pr) - \eq(\x))  -  U( \psi)  \leq   -b_3\norm{ \psi}^2. 
            \label{eq:U_minus_generic}
		\end{align}
	\end{subequations}
    Further, $\nabla U$ is $b_4$-Lipschitz continuous for some $b_4 > 0$.
	Moreover, assume there exist a continuously differentiable function $W: \domX_0 \to \R$ and $\bar{\pr}_3 > 0$ such that, for all $\pr \in (0,\bar{\pr}_3)$, it holds
	\begin{subequations}\label{eq:W_generic}
		\begin{align}
			&c_1 \norm{\x - \xstar}^2 \leq W(\x) \leq c_2\norm{\x - \xstar}^2\label{eq:W_first_bound_generic}
			\\
			&W(\slow(\x,\eq(\x),\pr))  -  W(\x)  \leq  -\pr c_3\norm{\x - \xstar}^2,\label{eq:W_minus_generic}
		\end{align}
	\end{subequations}
    for all $x \in \domX_0$ and some $c_1, c_2, c_3 > 0$.
    Further, $\nabla W$ is $c_4$-Lipschitz continuous for some $c_4 > 0$.
    Finally, assume there exists $\bar{\pr}_4 > 0$ such that, for all $\pr \in (0,\bar{\pr}_4)$, it holds
    \begin{align}\label{eq:one_step}
        \slow(\x,y,\pr) \in \domX_0,
    \end{align}
    for all $(\x,\y) \in \domX_0 \times \domY$.
    Then, there exists $\bar{\pr} \in (0,\min\{\bar{\pr}_1,\bar{\pr}_2, \bar{\pr}_3,\bar{\pr}_4\})$ such that, for all $\pr \in (0,\bar{\pr})$, it holds
    \begin{align*}
        &W(\slow(x,y,\pr)) + U(\Fast(\x, y, \pr) - \eq(\slow(x,y,\pr)))
        - W(x) - U(\y - \eq(\x))
        \leq -\lambda
        (\norm{\x - \xstar}^2 + \norm{\y - \eq(\x)}^2),
    \end{align*}
    for all $(\x,\y) \in \domX_0 \times \domY$.
\end{theorem}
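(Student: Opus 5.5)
Define $\tyt := \yt - \eq(\xt)$ and the composite function $V(x,y) := W(x) + U(y - \eq(x))$. Expand its one-step increment along~\eqref{eq:interconnected_system_generic} into four pieces:
\begin{align*}
\Delta V &= \big[W(\slow(x,\eq(x),\pr)) - W(x)\big] + \big[W(\slow(x,y,\pr)) - W(\slow(x,\eq(x),\pr))\big]
\\
&\quad + \big[U(\Fast(x,y,\pr) - \eq(x)) - U(y - \eq(x))\big] + \big[U(\Fast(x,y,\pr) - \eq(x^+)) - U(\Fast(x,y,\pr) - \eq(x))\big],
\end{align*}
where $x^+ := \slow(x,y,\pr)$. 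Condition~\eqref{eq:one_step} guarantees $x^+ \in \domX_0$, so $W(x^+)$ and $\eq(x^+)$ are well defined.

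The first bracket is at most $-\pr c_3\norm{x-\xstar}^2$ by~\eqref{eq:W_minus_generic}. The third is at most $-b_3\norm{\ty}^2$ by~\eqref{eq:U_minus_generic} with $\psi = \ty$.

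For the second bracket, use the descent lemma from the $c_4$-Lipschitz gradient of $W$:
\[
W(a) - W(b) \leq \nabla W(b)^\top(a-b) + \tfrac{c_4}{2}\norm{a-b}^2,
\]
with $b = \slow(x,\eq(x),\pr)$. By~\eqref{eq:lipschitz_generic_slow}, $\norm{a-b} \leq \pr L_\slow\norm{\ty}$. Since $\nabla W(\xstar) = 0$ (it is the minimizer of $W$), we get $\norm{\nabla W(b)} \leq c_4\norm{b - \xstar} \leq c_4(1+\pr\tilde L_\slow)\norm{x-\xstar}$, using~\eqref{eq:single_integrator_like}. This bracket is therefore bounded by $\pr k_1\norm{x-\xstar}\norm{\ty} + \pr^2 k_2\norm{\ty}^2$.

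For the fourth bracket, apply the same descent lemma to $U$, with gradient Lipschitz constant $b_4$ and $\nabla U(0)=0$. Write $\Fast(x,y,\pr) - \eq(x) =: \ty^+$. By~\eqref{eq:lipschitz_generic_fast} and~\eqref{eq:fast_equilibrium}, $\norm{\ty^+} = \norm{\Fast(x,y,\pr)-\Fast(x,\eq(x),\pr)} \leq L_\Fast\norm{\ty}$. By~\eqref{eq:lipschitz_eq}, the perturbation satisfies
\[
\norm{\eq(x^+) - \eq(x)} \leq \lip_\eq\norm{x^+ - x} \leq \lip_\eq\pr(\tilde L_\slow\norm{x-\xstar} + L_\slow\norm{\ty}),
\]
where the bound on $\norm{x^+ - x}$ comes from the triangle inequality with~\eqref{eq:single_integrator_like} and~\eqref{eq:lipschitz_generic_slow}. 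The fourth bracket is thus at most $\pr k_3\norm{\ty}(\norm{x-\xstar}+\norm{\ty}) + \pr^2 k_4(\norm{x-\xstar}+\norm{\ty})^2$.

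Collecting terms, $\Delta V \leq -\zeta\T M(\pr)\zeta$ with $\zeta = (\norm{x-\xstar},\norm{\ty})$ and
\[
M(\pr) = \begin{bmatrix} \pr c_3 - \pr^2 m_{11} & -\pr m_{12} \\ -\pr m_{12} & b_3 - \pr m_{22} \end{bmatrix}.
\]
This step is the main obstacle. The slow diagonal entry is only $O(\pr)$, while the cross term is also $O(\pr)$, so positive definiteness is not automatic. Checking the determinant, $(\pr c_3 - O(\pr^2))(b_3 - O(\pr)) - \pr^2 m_{12}^2 = \pr(c_3 b_3) - O(\pr^2)$, which is positive for small $\pr$. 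This holds precisely because the cross term is $O(\pr)$ rather than $O(1)$, a fact that relies on $\nabla W(\xstar)=0$ and on the $\pr$-scaling in~\eqref{eq:lipschitz}.

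I therefore choose $\bar\pr$ below $\min\{\bar\pr_1,\ldots,\bar\pr_4\}$ and small enough that $M(\pr) \succeq \pr\lambda' I$. Since $\pr$ is bounded above by $\bar\pr$, the conclusion holds with $\lambda$ depending on $\pr$; equivalently, for fixed $\pr$ one gets some $\lambda>0$. If a $\pr$-uniform constant is wanted, one instead weights $V$ as $W + \pr^{-1}U$ or similar, but the statement only needs some $\lambda > 0$.
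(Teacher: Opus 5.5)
Your proof is correct and is essentially the paper's proof. It uses the same composite function $W(x)+U(y-h(x))$ and the same add-and-subtract of $W(f(x,h(x),\delta))$ and $U(G(x,y,\delta)-h(x))$. The bounds are the same: $\|h(x^+)-h(x)\|\le \delta L_h(\tilde L_f\|x-x_\star\|+L_f\|\tilde y\|)$ and $\|G(x,y,\delta)-h(x)\|\le L_G\|\tilde y\|$. The $2\times 2$ quadratic form is also the same, and its $O(\delta)$ diagonal and cross terms are resolved through the determinant $\delta c_3 b_3-O(\delta^2)$, with $b_3$ implicitly taken independent of $\delta$ exactly as the paper does. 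Your $\lambda$ depends on $\delta$, just like the paper's smallest eigenvalue of $Q(\delta)$.

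The only difference is cosmetic. You use the one-sided descent lemma with $\nabla W(x_\star)=0$ and $\nabla U(0)=0$, whereas the paper uses the symmetric bound $|W(x)-W(x')|\le\tfrac{c_4}{2}\|x-x'\|(\|x-x_\star\|+\|x'-x_\star\|)$ and its analogue for $U$, which rest on the same facts.
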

\begin{proof}

    First, since it will be useful later, we note that the Lipschitz continuity of $\nabla U$ and $\nabla W$ implies that 
    \begin{subequations}
        \begin{align}
            |U( \psi)-U( \psi^\prime)|&\leq \tfrac{b_4}{2}\norm{\psi- \psi^\prime}(\norm{\psi} + \norm{\psi^\prime})
            \label{eq:U_bound_generic}
            \\
            |W(\x)-W(\x^\prime)|&\leq \tfrac{c_4}{2}\norm{\x - \x^\prime}(\norm{\x - \xstar} + \norm{\x^\prime - \xstar}),\label{eq:W_bound_generic}
        \end{align}
    \end{subequations}
    for all $\x, \x^\prime \in \domX$ and $\psi, \psi^\prime$ such that $\psi + \eq(\x) \in \domY$ and $\psi^\prime + \eq(\x^\prime) \in \domY$.
    Now, we introduce $\tyt := \yt - \eq(\xt)$, impose $\pr \in (0,\bar{\pr}_4) \implies \xtp \in \domX_0$, and rewrite~\eqref{eq:interconnected_system_generic} as 
    \begin{subequations}\label{eq:interconnected_system_generic_err}
        \begin{align}
            \xtp &=  \slow(\xt,\tyt + \eq(\xt),\pr)\label{eq:slow_system_generic_err}
            \\
            \tytp &= \Fast(\xt, \tyt +  \eq(\xt), \pr) - \eq(\xtp).\label{eq:fast_system_generic_err}
        \end{align}
    \end{subequations}%
    Now, let us consider the functions $W$ and $U$ characterized in~\eqref{eq:W_generic} and~\eqref{eq:U_generic}, respectively, and a pair $(x,\ty) \in \{(x,\ty) \in \domX_0 \times \R^{m} \mid \ty + \eq(\x) \in \domY\}$.
    By evaluating the increment $\Delta W(\x,\ty) := W(\slow(\x,\ty+\eq(\x),\pr)) - W(\x)$ of $W$ along the trajectories of subsystem~\eqref{eq:slow_system_generic_err}, we obtain 
    \begin{align}
        \Delta W(\x,\ty) 
        = W(\slow(\x,\ty+\eq(\x),\pr)) - W(\x)
        &\stackrel{(a)}{=} 
        W(\slow(\x,\eq(\x),\pr)) - W(\x) 
        + W(\slow(\x,\ty + \eq(\x),\pr)) - W(\slow(\x,\eq(\x),\pr))
        \notag\\
        &\stackrel{(b)}{\leq}
         -\pr c_3\norm{\x - \xstar}^2 + W(\slow(\x,\ty + \eq(\x),\pr)) 
        - W(\slow(\x,\eq(\x),\pr))
        \notag\\
        &\stackrel{(c)}{\leq}
        -\pr c_3\norm{\x - \xstar}^2 
        + \pr \frac{c_4}{2} L_{\slow}\norm{\ty}\norm{\slow(\x,\ty + \eq(\x),\pr) - \xstar}
        \notag\\
        &\hspace{.4cm}
        +
        \pr \frac{c_4}{2}L_{\slow}\norm{\ty}\norm{\slow(\x, \eq(\x),\pr) - \xstar},       
        \label{eq:deltaU_generic}
    \end{align}
    where in $(a)$ we add $\pm W(\slow(\x,\eq(\x),\pr))$, in $(b)$ we impose $\pr \in (0,\bar{\pr}_3)$ and we use~\eqref{eq:W_minus_generic} to bound the first two terms, while in $(c)$ we use the bound~\eqref{eq:W_bound_generic} and the Lipschitz continuity of $\slow$ (cf.~\eqref{eq:lipschitz_generic_slow}).
    By adding and subtracting $\x$ in $\norm{\slow(\x, \eq(\x),\pr) - \xstar}$ and $f(\x, \eq(\x), \pr)$ in $\norm{\slow(\x,\ty + \eq(\x),\pr) - \xstar}$, using the triangle inequality, imposing $\pr \in (0,\bar{\pr}_1)$, and applying~\eqref{eq:lipschitz_generic_slow},~\eqref{eq:single_integrator_like}, we get
    \begin{subequations}
        \begin{align} 
            &\|\slow(\x, \ty + \eq(\x),\pr) - \xstar\|
            \leq
            (1  +  \pr \tilde{L}_{\slow})\norm{\x - \xstar}  +  \pr L_{\slow}\norm{\ty}
            .
            \label{eq:bound_Lf_L_{\eq}_wt}
            \\
            &\norm{\slow(\x, \eq(\x),\pr) - \xstar} \leq  (1+ \pr \tilde{L}_{\slow})\norm{\x - \xstar}.\label{eq:bound_L_{\slow}_L_{\eq}}
        \end{align}
    \end{subequations}
    Hence, by using the inequalities~\eqref{eq:bound_Lf_L_{\eq}_wt} and~\eqref{eq:bound_L_{\slow}_L_{\eq}}, we then further bound~\eqref{eq:deltaU_generic} as 
    \begin{align}
        \Delta W (\x,\ty) 
        &\leq
        -\pr c_3\norm{\x - \xstar}^2 + \pr^2 k_3\norm{\ty}^2
        + (\pr k_1 + \pr^2 k_2)\norm{\ty}\norm{\x - \xstar}, \label{eq:deltaU_generic_final} 
    \end{align}
    introducing $k_1 := c_4\tilde{L}_{\slow}, k_2 := c_4 L_{\slow}\tilde{L}_{\slow}, k_3 := \frac{c_4}{2} L_{\slow}^2$.
    Now, let us introduce the function $\Delta \eq$ defined as 
    \begin{align}
        \Delta \eq(\x,\ty,\pr) := - \eq(\slow(\x,\ty + \eq(\x),\pr)) + \eq(\x).\label{eq:Delta_eq}
    \end{align}
    Then, we select $U$ as in~\eqref{eq:U_generic} and still consider the same pair $(x,\ty)$ considered above.
    By adding and subtracting $U(\Fast(\ty + \eq(\x),\x,\pr)-\eq(\x))$ to the increment $\Delta U(\ty,\x) := U(\Fast(\ty + \eq(\x),\x,\pr) - \eq(\slow(\x,\ty+\eq(\x),\pr))) - U(\ty)$ of $U$ along the trajectories of subsystem~\eqref{eq:fast_system_generic_err}, imposing $\pr \in (0,\bar{\pr}_2)$, and exploiting~\eqref{eq:U_minus_generic}, we get
    \begin{align}
        \Delta U(\ty,\x) 
        &\stackrel{(a)}{\leq}  -b_3\norm{\ty}^2  + \tfrac{b_4}{2}\norm{\Delta \eq(\x,\ty,\pr)}^2
        \label{eq:DeltaW_generic}
        + b_4\norm{\Delta \eq(\x,\ty,\pr)}\norm{\Fast(\x, \ty  +  \eq(\x),\pr) -  \eq(\x)},
    \end{align}
    where in $(a)$ we use~\eqref{eq:U_bound_generic} and the triangle inequality. 
    By using the definition of $\Delta \eq$ (cf.~\eqref{eq:Delta_eq}) and the Lipschitz continuity of $\eq$ in $\domX_0$ (cf.~\eqref{eq:lipschitz_eq}), for all $\pr \in (0,\min\{\bar{\pr}_1,\bar{\pr}_2,\bar{\pr}_3\})$, we can write
    \begin{align}
        \norm{\Delta \eq(\x,\ty,\pr)}
        %
        \leq L_{\eq}\norm{\slow(\x,\ty+\eq(\x),\pr) - \x} 
        &\stackrel{(a)}{\leq}
        L_{\eq}\norm{\slow(\x,\eq(\x),\pr) - \x} 
        + L_{\eq}\norm{\slow(\x,\ty+\eq(\x),\pr) - \slow(\x,\eq(\x),\pr)}
        \notag\\
        &\stackrel{(b)}{\leq}
        \pr L_{\eq}\tilde{L}_{\slow}\norm{\x - \xstar} + \pr L_{\eq}L_{\slow}\norm{\ty}
        ,
        \label{eq:bound_Deltah}
    \end{align}
    where in $(a)$ we add $\pm\slow(\x,\eq(\x),\pr)$ in the norm and apply the triangle inequality, while in $(b)$ we apply~\eqref{eq:single_integrator_like} to bound the first term and~\eqref{eq:lipschitz_generic_slow} to bound the second one.
    Further, since $\Fast(\x,\eq(\x),\pr) = \eq(\x)$ (cf.~\eqref{eq:fast_equilibrium}), we get
    \begin{align}
        &\norm{\Fast(\x, \ty + \eq(\x),\pr) - \eq(\x)} 
        =\norm{\Fast(\x, \ty + \eq(\x),\pr) - \Fast(\x, \eq(\x),\pr)}
         \leq 
        L_{\Fast}\norm{\ty},\label{eq:bound_g}
    \end{align}
    where the inequality is due to the Lipschitz continuity of $\Fast$ (cf. \eqref{eq:lipschitz_generic_fast}). 
    Using inequalities~\eqref{eq:bound_Deltah} and~\eqref{eq:bound_g}, for all $\pr \in (0,\min\{\bar{\pr}_1,\bar{\pr}_2,\bar{\pr}_3,\bar{\pr}_4\})$, we can bound~\eqref{eq:DeltaW_generic} as 
    \begin{align}
        \Delta U(\ty,\x) 
        &\leq
        (-b_3 +\pr k_6 + \pr^2 k_7)\norm{\ty}^2 + \pr^2 k_8\norm{\x - \xstar}^2
        + (\pr k_4 + \pr^2 k_5)\norm{\x - \xstar}\norm{\ty}, \label{eq:deltaW_generic_final}
    \end{align}
    where we introduce the constants 
    \begin{align*}
        k_4  :=  b_4 L_{\eq} L_{\Fast}\tilde{L}_{\slow}, \quad
        k_5 := b_4 L_{\eq}^2 \tilde{L}_{\slow}L_\slow,
        \quad
        k_6 := b_4 L_{\eq} L_{\Fast}L_{\slow}, \quad
        k_7 := \frac{b_4}{2} L_{\eq}^2 L_{\slow}^2, \quad
        k_8 := \frac{b_4}{2} L_{\eq}^2 \tilde{L}_{\slow}^2.
    \end{align*}
    Now, we use $W$ and $U$ to introduce a candidate Lyapunov function for the overall system~\eqref{eq:interconnected_system_generic_err}, namely
    \begin{align*}
        V(\x,\ty) := W(\x) + U(\ty). 
    \end{align*}
    Then, we evaluate the increment $\Delta V(\x,\ty) := V(\slow(\x,\ty+\eq(\x),\pr),\Fast(\x,\ty+\eq(\x),\pr) - \eq(\slow(\x,\ty+\eq(\x),\pr))) - V(\x,\ty) = \Delta W(\x,\ty) + \Delta U(\ty,\x)$ of $V$ along the trajectories of system~\eqref{eq:interconnected_system_generic_err}.
    By~\eqref{eq:deltaU_generic_final} and~\eqref{eq:deltaW_generic_final}, it holds
    \begin{align}
        \Delta V(\x,\ty) &\leq 
        -
        \begin{bmatrix}
            \norm{\x - \xstar}
            \\
            \norm{\ty}
        \end{bmatrix}\T Q(\pr) 
        \begin{bmatrix}
            \norm{\x - \xstar}
            \\
            \norm{\ty}
        \end{bmatrix},\label{eq:deltaV_generic}
    \end{align}
    where we introduce $Q(\pr) = Q(\pr)\T \in \R^2$ defined as 
    \begin{align*}
        &Q(\pr) := \begin{bmatrix}
            \pr c_3 -\pr^2k_8& q_{21}(\pr)\\
            q_{21}(\pr) & b_3 - \pr k_6 - \pr^2 (k_3 + k_7)
        \end{bmatrix},
    \end{align*}
    with $q_{21}(\pr) := -\frac{1}{2}(\pr (k_1+k_4) + \pr^2 (k_2+k_5))$. 
    By Sylvester criterion, we know that $Q \succ 0$ if and only if 
        \begin{align}\label{eq:conditions_generic}
            \begin{cases} 
                \pr c_3 - \pr^2 k_8 &> 0 
                \\
                c_3 b_3 &> \frac{p(\pr)}{\pr},
            \end{cases} 
        \end{align}
    where the polynomial $p(\pr)$ is defined as 
    \begin{align}
        p(\pr) &:= q_{21}(\pr)^2 + \pr^2 c_3k_6 
        + \pr^2 (\pr c_3(k_3 + k_7)+b_3k_8)
         -\pr^3k_6k_8 - \pr^4k_8(k_3+k_7).
    \end{align}
    Then, to achieve the first condition of~\eqref{eq:conditions_generic} it follows that $\pr < c_3/k_8=:\bar{\pr}_5$. 
    Further, since $p(\pr)$ is a continuous function and $\lim_{\pr \to 0}p(\pr)/\pr = 0$,
    there exists $\bar{\pr} \in (0,\min\{\bar{\pr}_1,\bar{\pr}_2, \bar{\pr}_3, \bar{\pr}_4, \bar{\pr}_5\})$ 
    such that also the second condition of~\eqref{eq:conditions_generic} is satisfied for all $\pr \in (0,\bar{\pr})$.
    Under this choice of $\pr$, the proof follows by denoting with $\lambda > 0$ the corresponding smallest eigenvalue of $Q(\pr)$.
\end{proof}

\subsection{Proof of Lemma~\ref{lemma:single-int}}
\label{sec:proof_lemma_single_int}

We assume $\pr \in (0,\bprsi)$, where $\bprsi > 0$ will be characterized later.
By definition of $\slow$ (cf.~\eqref{eq:slow_definition}) and denoting with $(\xcont(\tint),\xicont(\tint)), (\xcont^\prime(\tint),\xicont^\prime(\tint)) \in \domX \times \domXi$ the solution of~\eqref{eq:plant_ct} at time $\tint$ with initial conditions $(x,\xi), (x,\xi^\prime) \in \domX \times \domXi$ and constant inputs $u, u^\prime \in \domU$, respectively, it holds 
\begin{align}
    \norm{\slow(x,\xi,u,\pr) - \slow(x,\xi^\prime,u^\prime,\pr)}
    &=\int_{0}^\pr \norm{\slowc(\xcont(\tint),\xicont(\tint),u) - \slowc(\xcont^\prime(\tint),\xicont^\prime(\tint),u^\prime)}d\tint,
\notag\\
&\stackrel{(a)}{\leq}
    \pr\lip_{\slowc}\norm{u  -  u^\prime}  +  \lip_{\slowc}\int_{0}^\pr\norm{\begin{bmatrix}\xcont(\tint) - \xcont^\prime(\tint) \\ \xicont(\tint) - \xicont^\prime(\tint)\end{bmatrix}} d\tint,
    \label{eq:second_bound}
\end{align}
where $(a)$ uses the Lipschitz continuity of $\slowc$ (cf. Assumption~\ref{ass:slowness}) and the triangle inequality.
Now, we note that
\begin{align}
    \frac{d }{d\tint} \norm{\begin{bmatrix}\xcont(\tint) - \xcont^\prime(\tint) \\ \xicont(\tint) - \xicont^\prime(\tint)\end{bmatrix}}
    &\leq
    \norm{\begin{bmatrix}\slowc(\xcont(\tint),\xicont(\tint),u) - \slowc(\xcont^\prime(\tint),\xicont^\prime(\tint),u^\prime)
        \\ 
        \frac{1}{\prct}\fastc(\xcont(\tint),\xicont(\tint),u) - \frac{1}{\prct}\fastc(\xcont^\prime(\tint),\xicont^\prime(\tint),u^\prime)\end{bmatrix}}
        \notag\\
        &\stackrel{(a)}{\leq}
        \sqrt{\lip_{\slowc}^2 + \lip_{\fastc}^2/\prct^2}
        \norm{\begin{bmatrix}\xcont(\tint) - \xcont^\prime(\tint) \\ \xicont(\tint) - \xicont^\prime(\tint)
            \\\end{bmatrix}} 
            + 
            \sqrt{\lip_{\slowc}^2 + \lip_{\fastc}^2/\prct^2}
            \norm{u - u^\prime},
            \label{eq:third_bound}
        \end{align}
where in $(a)$ we use the Lipschitz continuity of $\slowc$ and $\fastc$ (cf. Assumption~\ref{ass:slowness}) and the triangle inequality.
By applying Gronwall's inequality (see, e.g., \cite[Th. 1.3.1]{ames1997inequalities}) to~\eqref{eq:third_bound} and defining $\alpha_\prct := \sqrt{\lip_{\slowc}^2 + \lip_{\fastc}^2/\prct^2}$,
we get
\begin{align}
    \norm{
        \begin{bmatrix}
            \xcont(\tint) - \xcont^\prime(\tint) 
            \\
            \xicont(\tint) - \xicont^\prime(\tint)
        \end{bmatrix}} 
        &
        \leq 
        \exp\left(\alpha_\prct \tint\right)
        \norm{
            \begin{bmatrix}
            \xcont(0) - \xcont^\prime(0) 
            \\
            \xicont(0) - \xicont^\prime(0)
        \end{bmatrix}
        } 
        + \norm{u - u^\prime}\left(\exp\left(\alpha_\prct \tint\right) - 1\right)
        \notag\\
        &
        \stackrel{(a)}{\leq}
        \exp\left(\alpha_\prct \tint\right)
        \norm{\xi - \xi^\prime
        } 
        + \norm{u - u^\prime}\left(\exp\left(\alpha_\prct \tint\right) - 1\right),
        \label{eq:fourth_bound}
\end{align}
where in $(a)$ we use (i) $\xcont(0) = \xcont^\prime(0)$ and (ii) $\xicont(0) = \xi$ and $\xicont^\prime(0) = \xi^\prime$.
By plugging~\eqref{eq:fourth_bound} into~\eqref{eq:second_bound}, we get
\begin{align}
    &\norm{\slow(x,\xi,u,\pr) - \slow(x,\xi^\prime,u^\prime,\pr)}
    \notag\\
    &\leq
    \lip_{\slowc}\pr\norm{u - u^\prime}
    + \lip_{\slowc}\norm{\xi - \xi^\prime}\int_{0}^\pr \exp\left(\alpha_\prct \tint      \right) d\tint
    +\lip_{\slowc}\norm{u - u^\prime}\int_{0}^\pr \left(\exp\left(\alpha_\prct \tint\right) - 1\right) d\tint
    \notag\\
    &\stackrel{(a)}{=}
    \lip_{\slowc}\left(\tfrac{1}{\alpha_\prct}\left(\exp\left(\alpha_\prct \pr\right) - 1\right)\right) \left(
        \norm{\xi - \xi^\prime} + \norm{u - u^\prime}
    \right)
    \notag\\
    &\stackrel{(b)}{\leq}
    \pr\lip_{\slowc}\exp(\alpha_\prct \bprsi) \left(
        \norm{\xi - \xi^\prime} + \norm{u - u^\prime}
    \right)
    \notag\\
    &\stackrel{(c)}{\leq}
    \pr\lip_{\slowc}
    \exp\left(\sqrt{\lip_{\slowc}^2\bar{\pr}_1^2 + \lip_{\fastc}^2/\ratio^2}\right)
        (\norm{\xi - \xi^\prime} + \norm{u - u^\prime})
    ,\label{eq:proof_condition_1}
\end{align}
where in $(a)$ we solve the integrals, in $(b)$ we retain only the zeroth-order term in the Taylor expansion and bound the remainder linearly in $\pr$ in any interval $[0,\bprsi]$ for some $\bprsi > 0$, while in $(c)$ we use the definition of $\alpha_\prct$, the design choice $\prct = \ratio\pr$ and the fact that $\pr \in (0,\bprsi)$.
Hence, by specializing~\eqref{eq:proof_condition_1} with $u = \Pi(z)$ and $u^\prime = \Pi(z^\prime)$ and using the $\lip_\Pi$-Lipschitz continuity of $\Pi$ (cf. Assumption~\ref{ass:ges_MPC}), we conclude that condition~\eqref{eq:slow_condition} is satisfied with $\lip_{\slow} := \lip_{\slowc}\exp(\sqrt{\lip_{\slowc}^2\bar{\pr}_1^2 + \lip_{\fastc}^2/\ratio^2})\sqrt{1 + \lip_\Pi^2}$.

As regards condition~\eqref{eq:fast_condition}, we note that, by definition of $\fast$ (cf.~\eqref{eq:fast_definition}) and using the triangle inequality, it holds
\begin{align}
    &\norm{\fast(x,\xi,u,\pr) - \fast(x,\xi^\prime,u^\prime,\pr)}
    \norm{\xi - \xi^\prime} \label{eq:bound_for_fast_condition} 
    %
    + \frac{1}{\prct}\int_{0}^\pr \norm{\fastc(\xcont(\tint),\xicont(\tint),u) - \fastc(\xcont^\prime(\tint),\xicont^\prime(\tint),u^\prime)} d\tint.
\end{align}
Hence, by slightly adapting the arguments used in~\eqref{eq:third_bound},~\eqref{eq:fourth_bound}, and~\eqref{eq:proof_condition_1}, we can further bound~\eqref{eq:bound_for_fast_condition} as 
\begin{align}
    &%
    \norm{\fast(x,\xi,u,\pr) - \fast(x,\xi^\prime,u^\prime,\pr)}
    \leq
    (
        1 + \tfrac{\lip_{\fastc}}{\prct} \pr \exp(\alpha_\prct \pr)
        ) \norm{\xi - \xi^\prime}
        + \tfrac{\lip_{\fastc}}{\prct} \pr \exp(\alpha_\prct \pr) \norm{u - u^\prime}.
        \label{eq:proof_condition_2_pre}
    \end{align}
    By combining~\eqref{eq:proof_condition_2_pre} in the case with $u = \Pi(z)$ and $u^\prime = \Pi(z^\prime)$, the $\lip_\Pi$-Lipschitz continuity of $\Pi$ (cf. Assumption~\ref{ass:ges_MPC}), the $\lip_\cA$-Lipschitz continuity of $\cA$ (cf. Assumption~\ref{ass:ges_MPC}), the triangle inequality, and $\prct = \ratio \pr$, we get 
    \begin{align}
        &
        \norm{\begin{bmatrix} 
                    \fast(x,\xi,\Pi(z),\pr) - \fast(x,\xi^\prime,\Pi(z^\prime),\pr)
                    \\
                    \mpc(z,x) - \mpc(z^\prime,x)
                \end{bmatrix}}
                \notag\\
        &\leq
        (
            1 + \tfrac{\lip_{\fastc}}{\ratio} \exp(\sqrt{\lip_{\slowc}^2\bar{\pr}_1^2 + \lip_{\fastc}^2/\ratio^2})
            ) \norm{\xi - \xi^\prime}
            \label{eq:proof_condition_pre}
            + (
                \tfrac{\lip_{\fastc}\lip_{\Pi}}{\ratio} \exp(\sqrt{\lip_{\slowc}^2\bar{\pr}_1^2 + \lip_{\fastc}^2/\ratio^2}) + \lip_{\cA}
                )
                \norm{z - z^\prime}.
    \end{align}
    Hence, given $\lip_1 := 1 + \tfrac{\lip_{\fastc}}{\ratio} \exp(\sqrt{\lip_{\slowc}^2\bar{\pr}_1^2 + \lip_{\fastc}^2/\ratio^2})$ and $\lip_2 := \tfrac{\lip_{\fastc}\lip_{\Pi}}{\ratio} \exp(\sqrt{\lip_{\slowc}^2\bar{\pr}_1^2 + \lip_{\fastc}^2/\ratio^2}) + \lip_{\cA}$, condition~\eqref{eq:fast_condition} is satisfied with $\lip_{\fast} := \sqrt{\lip_1^2 + \lip_2^2}$.

To prove~\eqref{eq:red_condition}, we first write the continuous-time counterpart of system~\eqref{eq:redcuced_system}. 
Without loss of generality, we consider the time interval $[0,\pr)$ and this system reads as
\begin{align}\label{eq:reduced_system_ct}
    \dot{x}\ct(\tti) = \redc(x\ct(\tti),\ustar(x)),
\end{align}
with $x\ct(0) = x$ and $\ustar(x) := \Pi(\zstar(x))$.
Now, by definition of $\red$ (cf.~\eqref{eq:red}) and $\slow$ (cf.~\eqref{eq:slow_definition}), we note that
\begin{subequations}\label{eq:derivatives}
    \begin{align}
        &\nabla_3 \red(x,\ustar(x),\pr) = \redc(\xcont(\pr),\ustar(x)) 
        \\
        &\nabla_3^2 \red(x,\ustar(x),\pr) 
        = \nabla_1 \redc(\xcont(\pr),\ustar(x)) \redc(\xcont(\pr),\ustar(x)),
    \end{align}
\end{subequations}
where, with a slight abuse of notation, $\xcont(\pr)$ denotes the solution of system~\eqref{eq:reduced_system_ct} at time $\pr$ with initial condition $x$.
With these results at hand, we compute the Taylor's expansion of $\red(x,\ustar(x),\pr)$ around $(x,\ustar(x),0)$ with the integral form of the second-order remainder, namely
\begin{align}
    \red(x,\ustar(x),\pr) 
    &= \red(x,\ustar(x),0) + \delta \nabla_3\red(x,\ustar(x),0) 
    + \int_{0}^\pr (\pr - \tint)\nabla_3^2 \red(x,\ustar(x),\tint) d\tint
    \notag\\
    &\stackrel{(a)}{=}
    x + \delta \redc(x,\ustar(x)) 
    \label{eq:expansion_chi}
    + \int_{0}^\pr (\pr - \tint)\nabla_1 \redc(\xcont(\tint),\ustar(x))
    \redc(\xcont(\tint),\ustar(x)) d\tint,
\end{align}
where in $(a)$ we use the definition of $\red$ (cf.~\eqref{eq:red}) and $\slow$ (cf.~\eqref{eq:slow_definition}) as well as~\eqref{eq:derivatives}.
Let us provide a bound for the norm of the second term in the right-hand side of~\eqref{eq:expansion_chi}.
To this end, since $\xstar$ is an equilibrium point of system~\eqref{eq:reduced_system_ct} (cf. Assumption~\ref{ass:red_ges}), we can write 
\begin{align}
    \norm{\redc(x,\ustar(x))}
    &= \norm{\redc(x,\ustar(x)) - \redc(\xstar,\ustar(\xstar))} 
    \stackrel{(a)}{\leq} \lip_{\slowc}\sqrt{1 + \lipeq^2(1 +\lip_{\Pi}^2\lip_{\zstar}^2) + \lip_{\Pi}^2\lip_{\zstar}^2}\norm{x - \xstar},
    \label{eq:bound_linear_term}
\end{align}
where in $(a)$ we use the definition of $\redc$ (cf.~\eqref{eq:redc}) and the Lipschitz continuity of $\slowc$, $\xieq$, $\Pi$, and $\zstar$ (see Assumptions~\ref{ass:slowness},~\ref{ass:ges_fast}, and~\ref{ass:ges_MPC}).
Now, let us provide a bound for the norm of the integral remainder term.
Given $\lip_{\redc} := \lip_{\slowc}\sqrt{1 + \lipeq^2(1 + \lip_{\Pi}^2\lip_{\zstar}^2)}$, $\redc$ is $\lip_{\redc}$-Lipschitz continuous with respect to the first argument (implied by the definition of $\redc$ in~\eqref{eq:redc} and the Lipschitz continuity of $\slowc$, $\xieq$, $\Pi$, and $\zstar$, see Assumptions~\ref{ass:slowness},~\ref{ass:ges_fast}, and~\ref{ass:ges_MPC}, respectively).
By combining this fact with the Cauchy-Schwarz inequality and the fact that $\xstar$ is an equilibrium point of system~\eqref{eq:reduced_system_ct} (cf. Assumption~\ref{ass:red_ges}), we get
\begin{align}
    \int_{0}^\pr (\pr - \tint)\norm{\nabla_1 \redc(\xcont(\tint),\ustar(x))\redc(\xcont(\tint),\ustar(x))} d\tint
    &\leq 
    \int_{0}^\pr (\pr - \tint)\lip_{\redc}\norm{\redc(\xcont(\tint),\ustar(x)) -  \redc(\xstar,\ustar(\xstar))} d\tint
    \notag\\
    &\stackrel{(a)}{\leq} \int_{0}^\pr (\pr - \tint)\lip_{\redc}^2\norm{\xcont(\tint) - \xstar}d\tint
    \notag\\
    &\hspace{.4cm} 
    + \int_{0}^\pr (\pr - \tint)\lip_{\redc}\lip_{\slowc}\lip_{\Pi} \lip_{\zstar}\norm{\xstar - x} d\tint,
    \label{eq:bound}
\end{align}
where in $(a)$ we combine the definition of $\redc$ (cf.~\eqref{eq:redc}) with the Lipschitz continuity of $\slowc$, $\Pi$, and $\zstar$ (see Assumptions~\ref{ass:slowness},~\ref{ass:ges_fast}, and~\ref{ass:ges_MPC}, respectively).
    Now, let us focus on $\norm{\xcont(\tint) - \xstar}$.
    Let us introduce $\tilde{x}\ct := \xcont - \xstar$, which allows us to accordingly rewrite system~\eqref{eq:reduced_system_ct} as
    \begin{align}\label{eq:reduced_system_ct_error}
        \dot{\tilde{x}}\ct(t) = \redc(\tilde{x}\ct(t) + \xstar,\ustar(x)).
    \end{align}
    Then, for all $\tint \in \R$, it holds 
    \begin{align}
        \tfrac{d}{d \tint}\norm{\tilde{x}\ct(\tint)} &\leq \norm{\dot{\tilde{x}}\ct(\tint)}
        %
        %
        %
        %
        %
        %
        %
        %
        %
        \stackrel{(a)}{\leq} \lip_{\redc}\norm{\tilde{x}\ct(\tint)}  +  \lip_{\slowc}\lip_{\Pi}\lip_{\zstar}\norm{\xstar  -  x},
    \end{align}
    where in $(a)$ we add $\redc(x_{\star},\ustar(\xstar)) = 0$ and use the Lipschitz continuity of $\redc$, $\slowc$, $\mpc$ and $\zstar$.
    Then, by Gronwall Lemma (see, e.g., \cite[Th. 1.3.1]{ames1997inequalities}), it holds
    \begin{align}
        \norm{\tilde{x}\ct(\tint)} &\leq \exp(\lip_{\redc} \tint)\norm{x - \xstar}
         + \lip_{\slowc}\lip_{\Pi}\lip_{\zstar}\norm{\x  - \xstar}\int_{0}^\tint \exp(\lip_{\redc}(\tint  -  \sigma)) d\sigma,
                \label{eq:gronwall_red}\\
        %
        &=(
            \exp(\lip_{\redc}\tint)  +  \tfrac{\lip_{\slowc}\lip_{\Pi} \lip_{\zstar}}{\lip_{\redc}} (\exp(\lip_{\redc}\tint) - 1)
        )\norm{x - \xstar}.\notag
    \end{align}
    By putting \eqref{eq:gronwall_red} into \eqref{eq:bound}, we get
    \begin{align}
        &\int_{0}^\pr (\pr - \tint)\norm{\nabla_1 \redc(\xcont(\tint),\ustar(x))\redc(\xcont(\tint),\ustar(x))} d\tint
        \notag\\
        &\leq
         (1  +  \lip_{\slowc}\lip_{\Pi} \lip_{\zstar} \lip_{\redc})\frac{\exp(\lip_{\redc} \pr)  -  \lip_{\redc} \pr  -  1}{\lip_{\redc}^2}\norm{x  -  \xstar}
        + \lip_{\redc}\lip_{\slowc}\lip_{\Pi} \lip_{\zstar} \tfrac{\pr^2}{2}\norm{x - \xstar}
        \notag\\
        &\stackrel{(b)}{\leq}
        (1 + \lip_{\slowc}\lip_{\Pi} \lip_{\zstar} \lip_{\redc})\exp(\lip_{\redc} \bprsi)\tfrac{\pr^2}{2}\norm{x - \xstar}
        + \lip_{\redc}\lip_{\slowc}\lip_{\Pi} \lip_{\zstar} \tfrac{\pr^2}{2}\norm{x - \xstar}
        ,
        \label{eq:second_order_term}
    \end{align}
    where %
    $(a)$ follows by a Taylor expansion of the exponential term up to the first-order term.

    Given $\tilde{\lip}_{\slow} > \lip_{\slowc}\sqrt{1 + \lipeq^2(1 +\lip_{\Pi}^2\lip_{\zstar}^2) + \lip_{\Pi}^2\lip_{\zstar}^2}$, the proof of~\eqref{eq:red_condition} follows by combining~\eqref{eq:expansion_chi},~\eqref{eq:bound_linear_term}, and~\eqref{eq:second_order_term} and by defining $\bar{\pr}_1 > 0$ as the largest value of $\pr$ such that 
    \begin{align*}
        &
        \pr\tilde{\lip}_{\slow} \geq \pr\lip_{\slowc}\sqrt{1 + \lipeq^2(1 +\lip_{\Pi}^2\lip_{\zstar}^2) + \lip_{\Pi}^2\lip_{\zstar}^2}
        %
        +
        \tfrac{\pr^2}{2}((1 + \lip_{\slowc}\lip_{\Pi} \lip_{\zstar} \lip_{\redc})\exp(\lip_{\redc} \pr) + \lip_{\redc}\lip_{\slowc}\lip_{\Pi} \lip_{\zstar}).
    \end{align*}

\subsection{Proof of Lemma~\ref{lemma:bl}}
\label{sec:bl_proof}

The proof consists in combining the stability properties of the continuous-time extra dynamics~\eqref{eq:fast_plant} (cf. Assumption~\ref{ass:ges_fast}) with those of the optimizer~\eqref{eq:mpc_dyn} (cf. Assumption~\ref{ass:ges_MPC}), and in suitably handling the drift of the fast plant equilibrium $\xieq(x, \Pi(\tzt + \zstar(x)))$ due to the variations of $\tzt$.
To this end, we consider the candidate Lyapunov function $U$ defined in~\eqref{eq:cU_def}.
We note that~\eqref{eq:cU_1} directly follow by~\eqref{eq:U_1} and~\eqref{eq:cL_1}.
To prove~\eqref{eq:cU_2}, we first define %
    \begin{align}
        \txi\ud{+}\du{\text{\tiny nom}} &:= \tfast(x, \txi, \tz, \pr).
        \label{eq:txi_plus_nom}
    \end{align}
Then, we consider the increment $\cG(\txi\ud{+}\du{\text{\tiny nom}}) - \cG(\txi)$ of $\cG$ (cf. Assumption~\ref{ass:ges_fast}) along the trajectories of the nominal fast dynamics $\txi\ud{+}\du{\text{\tiny nom}} = \tfast(x, \txi, \tz, \pr)$.
To characterize $\cG(\txi\ud{+}\du{\text{\tiny nom}})$, we note that its continuous-time dynamics reads as
\begin{align}
    \dot{\txi}(\tti) = \frac{1}{\prct}\fastc(\x, \txi(\tti) + \xieq(x,\Pi(\tz + \zstar(x))), \Pi(\tz + \zstar(x))).
\end{align}
Hence, by invoking~\eqref{eq:U_2} in Assumption~\ref{ass:ges_fast}, the time-derivative of the Lyapunov function $\cG(\txi)$ satisfies
\begin{align}
    \frac{d}{d \tti} \cG(\txi(\tti)) %
    &\leq - \tfrac{a_3}{\prct} \|\txi(\tti)\|^2%
    \stackrel{(a)}{\leq} - \tfrac{a_3}{\prct a_2}\cG(\txi(\tti))\label{eq:dot_cG_2},
\end{align}
where in $(a)$ we apply~\eqref{eq:U_1} (cf. Assumption~\ref{ass:ges_fast}).
By multiplying~\eqref{eq:dot_cG_2} by $\exp(\tfrac{a_3}{\prct a_2} \tint)$, we rewrite~\eqref{eq:dot_cG_2} as
\begin{align}
    \frac{d}{d \tti} (
        \cG(\txi(\tti)) \exp(\tfrac{a_3}{\prct a_2} \tti)) \leq 0.
    \label{eq:dot_cG_3}
\end{align}
Hence, integrating both sides of \eqref{eq:dot_cG_3} over the sampling period from $\tau=0$ to $\tau=\pr$ yields
\begin{align}
    \int_0^{\pr} \frac{d}{d \tint} (
        \cG(\txi(\tint)) \exp(\tfrac{a_3}{\prct a_2} \tint)
    ) d\tint \leq 0.
    \label{eq:integral_cG}
\end{align}
Then, by using the Fundamental Theorem of Calculus and since $\txi(\pr) = \txi\ud{+}\du{\text{\tiny nom}}$, we rewrite~\eqref{eq:integral_cG} as
\begin{align}
    \int_0^{\pr} \frac{d}{d \tint}(
        \cG(\txi(\tint)) \exp(\tfrac{a_3}{\prct a_2} \tint)
    ) d\tint
    &%
    =  \cG(\txi\ud{+}\du{\text{\tiny nom}}) \exp(\tfrac{a_3}{\prct a_2} \pr) - \cG(\txi)
    = \exp(\tfrac{a_3}{\prct a_2} \pr) \cG(\txi\ud{+}\du{\text{\tiny nom}})  -  \cG(\txi).
    \label{eq:FTC_cG}
\end{align}
We plug~\eqref{eq:FTC_cG} in~\eqref{eq:integral_cG} and add $\pm\exp(\tfrac{a_3}{\prct a_2} \pr) \cG(\txi)$ to get
\begin{align}
    \cG(\txi\ud{+}\du{\text{\tiny nom}}) - \cG(\txi) \leq -(1 - \exp(-\tfrac{a_3}{\prct a_2} \pr)) \cG(\txi).
     \label{eq:FTC_cG_2}
\end{align}
Now, we use the design choice $\prct = \ratio\pr$ to rewrite~\eqref{eq:FTC_cG_2} as
\begin{align}
    \cG(\txi\ud{+}\du{\text{\tiny nom}}) - \cG(\txi) \leq -(1 - \exp(-\tfrac{a_3}{\ratio a_2})) \cG(\txi). 
    \label{eq:Delta_cG_nom_0}
\end{align}
Then, given $a_5 := 1 - \exp(- \tfrac{a_3}{\ratio a_2}) > 0$, we bound~\eqref{eq:Delta_cG_nom_0} as
\begin{align}
    \cG(\txi\ud{+}\du{\text{\tiny nom}}) - \cG(\txi) &\leq -a_5 \cG(\txi)
    \stackrel{(a)}{\leq} -\tilde{a}_3 \|\txi\|^2
    ,\label{eq:Delta_cG_nom}
\end{align}
where in $(a)$ we apply the lower bound in~\eqref{eq:U_1} (cf. Assumption~\ref{ass:ges_fast}) and define $\tilde{a}_3 := a_5 a_1$.
With this result at hand, we now consider the increment $\Delta U(\txi,\tz) := U(\txi\ud{+}, \tz\ud{+}) - U(\txi, \tz)$ of $U$ (cf.~\eqref{eq:cU_def}) along the trajectories of~\eqref{eq:BL} and, by using~\eqref{eq:cL_2} (cf. Assumption~\ref{ass:ges_MPC}), we get
\begin{align}
    \Delta U(\txi, \tz) 
    &\leq
    \cG(\txi\ud{+}) 
    - \cG(\txi) -\kappa d_3 \norm{\tz}^2
    \notag 
    \\
     &\stackrel{(a)}{=} 
    \cG(\txi\ud{+}) 
    - \cG(\txi\ud{+}\du{\text{\tiny nom}})
     + 
    \cG(\txi\ud{+}\du{\text{\tiny nom}})  -  \cG(\txi)
     -   \kappa d_3 \norm{\tz}^2
    \notag 
    \\
     &\stackrel{(b)}{\leq} 
     \cG(\txi\ud{+})
      -   
    \cG(\txi\ud{+}\du{\text{\tiny nom}}) - \tilde{a}_3 \|\txi\|^2
      -  \kappa d_3 \norm{\tz}^2,
    \label{eq:Delta_cU}
\end{align}
where in $(a)$ we add $\pm \cG(\txi\ud{+}\du{\text{\tiny nom}})$, while in $(b)$ we use~\eqref{eq:Delta_cG_nom}.
We now focus on $\cG(\txi\ud{+}) - \cG(\txi\ud{+}\du{\text{\tiny nom}})$.
By using the Lipschitz continuity of $\nabla \cG$ (cf. Assumption~\ref{ass:ges_fast}) and the definitions of $\txi\ud{+}$ (cf.~\eqref{eq:txi_plus}) and $\txi\ud{+}\du{\text{\tiny nom}}$ (cf.~\eqref{eq:txi_plus_nom}), we get
\begin{align}
    \cG(\txi\ud{+})  -  \cG(\txi\ud{+}\du{\text{\tiny nom}})
    &\leq  
    \tfrac{a_4}{2} \norm{\Delta \xieq(x, x, \tz,\tz\ud{+})}^2\label{eq:pert_bound}
    +  a_4  \norm{\Delta \xieq(x, x, \tz,\tz\ud{+})}\|\tfast(x, \txi, \tz, \pr)\|.
\end{align}
    By using the definition of $\tfast$ (cf.~\eqref{eq:tfast}), the fact that $\xieq(x,\Pi(\tz + \zstar(x))) = \fast(x,\xieq(x,\Pi(\tz + \zstar(x))), \Pi(\tz + \zstar(x)), \pr)$ (see the definition of $\fast$ in~\eqref{eq:fast_definition} and the equilibrium condition~\eqref{eq:xieq} in Assumption~\ref{ass:ges_fast}), and the fact that $\fast$ is $\lip_{\fast}$-Lipschitz continuous with $\lip_{\fast} := (1 + \frac{\lip_{\fastc}}{\ratio}\exp(\max\{\lip_{\slowc}\bar{\pr}_2,\frac{\lip_{\fastc}}{\ratio}\}))$ (see~\eqref{eq:proof_condition_2_pre} and~\eqref{eq:proof_condition_pre} in the proof of Lemma~\ref{lemma:single-int} in Appendix~\ref{sec:proof_lemma_single_int}), we get
    \begin{align}
        &
        \|\tfast(x, \txi, \tz, \pr)\|
        \leq
        \lip_{\fast}\|\txi\|,\label{eq:bound_norm_eta}
\end{align}
By observing the definition of $\Delta \xieq$ (cf.~\eqref{eq:delta_xieq}) and using the Lipschitz continuity of $\xieq$ (cf. Assumption~\ref{ass:ges_fast}), we get
\begin{align}
    \norm{\Delta \xieq(x, x,\tz,\tz\ud{+})} &\leq L_{\xi} \norm{\Pi(\tz\ud{+}  +  \zstar(x))  -  \Pi(\tz + \zstar(x))}
    %
    %
    \stackrel{(a)}{\leq} L_{\xi}\lip_{\Pi}(L_{\mpc} + 1)\norm{\tz},\label{eq:bound_norm_error}
\end{align}
where in $(a)$ we use the Cauchy-Schwarz inequality, 
the definition of $\tz\ud{+}$ (cf.~\eqref{eq:tz_plus}), and the triangle inequality, the fact that $\tmpc(0,x) = 0$ for all $x \in \feas$ (see~\eqref{eq:optimum_equilibrium} in Assumption~\ref{ass:ges_MPC}), and the Lipschitz continuity of $\tmpc$ and $\Pi$ (cf. Assumption~\ref{ass:ges_MPC}).
Therefore, by using~\eqref{eq:bound_norm_eta} and~\eqref{eq:bound_norm_error}, we further bound the right-hand side of~\eqref{eq:pert_bound} as
\begin{align}
   & \cG(\txi\ud{+}) - \cG(\txi\ud{+}\du{\text{\tiny nom}}) \leq 2k_1 \|\txi\|\norm{\tz} + k_2 \norm{\tz}^2, 
    \label{eq:drift_term}  
\end{align}
in which $k_1 := a_4 L_{\fast} L_{\xi} L_{\Pi} (L_{\mpc} + 1)/2$ and $k_2 := \frac{a_4}{2} L_{\xi}^2 \lip_{\Pi}^2(L_{\mpc} + 1)^2$.
By using~\eqref{eq:drift_term}, we bound~\eqref{eq:Delta_cU} as
\begin{align}
    \Delta U(\txi, \tz) &\leq -\tilde{a}_3 \|\txi\|^2 - (\kappa d_3 - k_2) \norm{\tz}^2 + 2 k_1 \|\txi\|\norm{\tz}
    \stackrel{(a)}{=}
    -  
    \begin{bmatrix}
        \norm{\txi}
        \\
        \norm{\tz}
    \end{bmatrix}\T
    \underbrace{
    \begin{bmatrix}
        \tilde{a}_3 & - k_1
        \\
        - k_1 & \kappa d_3 - k_2
    \end{bmatrix}}_{H(\kappa)} 
    \begin{bmatrix}
        \norm{\txi}
        \\
        \norm{\tz}
    \end{bmatrix},
\end{align}
where in $(a)$ we adopt a compact matrix notation.
By Sylvester criterion, we have
\begin{align*}
   H(\kappa) \succ 0 &\iff \tilde{a}_3(\kappa d_3 - k_2) > k_1^2
   \iff \kappa > \tfrac{k_1^2}{\tilde{a}_3 d_3} + \tfrac{k_2}{d_3}.
\end{align*}
The proof follows by setting $\bar{\kappa} := \frac{k_1^2}{\tilde{a}_3 d_3} + \frac{k_2}{d_3}$ and denoting with $b_3 > 0$ the smallest eigenvalue of $H(\kappa)$.

\subsection{Proof of Lemma~\ref{lem:rs}}
\label{sec:proof_lemma_rs}

        For later use, let us define %
        \begin{subequations}\label{eq:definitions}
            \begin{align}
                c_B &:= \lip_{\slowc}\lip_{\Pi}{\lip_{\zstar}}\label{eq:c_B}
                \\
                \lip_{\redc} &:= \lip_{\slowc}\sqrt{1 + \lipeq^2(1 + \lip_{\Pi}^2)}
                \label{eq:lip_redc}
                \\
                K_1(\pr) &:= (1 + c_B \pr) \exp(\lip_{\redc} \pr)
                \label{eq:K_1}
                \\
                K_2(\pr) &:= \lip_{\redc} K_1(\pr) + c_B.
                \label{eq:K_2}
            \end{align}
        \end{subequations}
        By definition of $\gammam$ (cf.~\eqref{eq:gammam}), we recall that as long as $x \in \Omega_W(\gammam)$ it holds $x \in \feas$ too and, thus, we can enjoy the stability property~\eqref{eq:descent_property_mpc} (cf. Assumption~\ref{ass:red_ges}).
        With this in mind, let us arbitrarily choose $\tilde{c}_3 \in (0, c_3)$ and define $\bar{\pr}_5 :=  \frac{1}{2 \lip_{f} + \lip_{f} \lip_{\Pi} \lip_{\zstar}}$ and, by using the definitions in~\eqref{eq:definitions}, $\bar{\pr}_6$ as the largest value of $\pr$ such that
        \begin{align}\label{eq:bar_pr_3}
            c_3 -\tilde{c}_3  > \pr\left(
                        c_3 K_2(\pr) - c_3 \tfrac{\pr}{3} K_2(\pr)^2 + \tfrac{c_4 c_B}{2} K_1(\pr) K_2(\pr)
                    \right).
        \end{align}
        Then, since $x \in \Omega_W(\gamma)$ with $\gamma \in (0, \gammam)$ by hypothesis, let us prove that $x(\pr) \in \Omega_W(\gammam)$ for all $\pr \in (0, \bar{\pr}_3)$, where $\bar{\pr}_3 := \min\{\bar{\pr}_5,\bar{\pr}_6\}$.
        We proceed by contradiction and, thus, suppose that there exists an exit time instant $\bar{\tti} \in (0, \pr]$ such that $W(x(\bar{\tti})) = \gammam$.
        Hence, $x(\tti) \in \Omega_W(\gammam)$ for all $\tti \in [0, \bar{\tti}]$ and we can write
        \begin{align}
            W(x(\bar{\tti})) 
            &= W(x) + \int_0^{\bar{\tti}} \nabla W(x(\tint))\T \redc(x(\tint), \Pi(\zstar(x))) d\tint
            \notag \\
            &\stackrel{(a)}{\leq} W(x) - c_3 \int_0^{\bar{\tti}} \norm{x(\tint) - \xstar}^2 d\tint
            + c_4 c_B \int_0^{\bar{\tti}} \norm{x(\tint) - \xstar}\norm{x(\tint) - x} d\tint,\label{eq:W_bar_tti}
        \end{align}
        where in $(a)$ we add and subtract $\int_0^{\bar{\tti}} \nabla W(x(\tint)){^\top} \redc(x(\tint), \Pi(\zstar(x(\tint)))) d\tint$,
        use~\eqref{eq:descent_property_mpc} in Assumption~\ref{ass:red_ges} and the Lipschitz continuity of $\nabla W$ (cf. Assumption~\ref{ass:red_ges}), $\redc$ (cf. Assumption~\ref{ass:slowness} and~\ref{ass:ges_fast}), $\Pi$, and $\zstar$ (cf. Assumption~\ref{ass:ges_MPC}).
        Now we focus on $\norm{x(\tti) - \xstar}$.
        By using the triangle inequality and since $\redc(\xstar, \Pi(\zstar(\xstar))) = 0$ (cf. Assumption~\ref{ass:red_ges}), we get
        \begin{align}
            \norm{x(\tti) - \xstar}
            &\leq
            \norm{x - \xstar} 
            + \int_{0}^{\tti} \norm{\redc(x(\tint), \Pi(\zstar(x))) - \redc(\xstar, \Pi(\zstar(\xstar)))} d\tint
            \notag \\
            &\stackrel{(a)}{\leq} 
             \norm{x - \xstar}(1 + c_B \tti)  +  \lip_{\redc} \int_{0}^{\tti} \norm{x(\tint) - \xstar} d\tint,
            \label{eq:bound_x_tau}
        \end{align}
        where in $(a)$ we add and subtract $\redc(\xstar, \Pi(\zstar(x(\tint))))$ inside the norm, use the triangle inequality, the Lipschitz continuity of $\slow$ (cf. Assumption~\ref{ass:slowness}), $\xieq$ (cf. Assumption~\ref{ass:ges_fast}), $\Pi$, and $\zstar$ (cf. Assumption~\ref{ass:ges_MPC}), and the constant $\lip_{\redc}$ (cf.~\eqref{eq:lip_redc}).
        Now, by using the Gronwall inequality (see, e.g., \cite[Th. 1.3.1]{ames1997inequalities}), we bound~\eqref{eq:bound_x_tau} as
        \begin{align}
            \norm{\x(\tti) - \xstar} &\leq \norm{x - \xstar} (1 + c_B \tti) \exp(\lip_{\redc} \tti)
            \stackrel{(a)}{\leq} 
            \norm{x - \xstar} K_1(\pr),
            \label{eq:boundAA}
        \end{align}
        where in $(a)$ we use the fact that $\tti \leq \pr$ and $K_1$ (cf.~\eqref{eq:K_1}).
        As for the term $\norm{x(\tti) - x}$, we write
        \begin{align}
            \norm{x(\tti) - x} 
            = 
            \int_{0}^{\tti} \norm{\redc(x(\tint), \Pi(\zstar(x)))} d\tint
            &\stackrel{(a)}{\leq} \int_{0}^{\tti} (\lip_{\redc} \norm{x(\tint) - \xstar} + c_B \norm{x - \xstar}) d\tint
            \notag \\
            &\stackrel{(b)}{\leq}  (\lip_{\redc} K_1(\pr) + c_B) \norm{x - \xstar} \tti
            \notag \\
            &\stackrel{(c)}{=} \tti K_2(\pr) \norm{x - \xstar},
            \label{eq:boundBB}
        \end{align}
        where in $(a)$ we use the same arguments used in~\eqref{eq:bound_x_tau} to handle the norm inside the integral, in $(b)$ we apply the bound~\eqref{eq:boundAA} on $\norm{x(s) - \xstar}$, the fact that $K_1(\pr)$ is increasing, and that $\tti \leq \pr$, and solve the integral, while in $(c)$ we use $K_2(\pr)$ (cf.~\eqref{eq:K_2}).
        We now go back to the bound on $\norm{x(\tti) - \xstar}$ and by adding and subtracting $x$ inside the norm, and using the triangle inequality, we get
        \begin{align}
            \norm{x(\tti) - \xstar}
            &\geq \norm{x - \xstar} - \norm{x(\tti) - x} 
            %
            %
            \stackrel{(a)}{\geq} \norm{x - \xstar} (1 - \tti K_2(\pr)),\label{eq:norm}
        \end{align}
        where in $(a)$ we use~\eqref{eq:boundBB}.
        Since $\tti \leq \pr$ and $\pr \in (0,\bar{\pr}_3)$, it holds $1 - \tti K_2(\pr) > 0$ and, thus, the bound~\eqref{eq:norm} implies
        \begin{align}
            \norm{x(\tti) - \xstar}^2 \geq \norm{x - \xstar}^2 (1 - \tti K_2(\pr))^2. \label{eq:boundCC}
        \end{align}
        Hence, by using~\eqref{eq:W_bar_tti},~\eqref{eq:boundAA},~\eqref{eq:boundBB}, and~\eqref{eq:boundCC}, we get
        \begin{align}
            W(x(\bar{\tti})) - W(x)
            %
            %
            %
            %
            %
            &\leq -c_3 \int_0^{\bar{\tti}} (1 - \tint K_2(\bar{\tti}))^2 d\tint \norm{x - \xstar}^2 
            + c_4 c_B \int_0^{\bar{\tti}} K_1(\bar{\tti}) K_2(\bar{\tti}) \tint d\tint \norm{x - \xstar}^2,
            \notag\\
            &\stackrel{(a)}{=}
            - \bar{\tti}c_3 \norm{x - \xstar}^2
             +  \bar{\tti}^2\left(c_3 K_2(\bar{\tti}) - c_3 \tfrac{\bar{\tti}}{3} K_2(\bar{\tti})^2\right)
            \norm{x - \xstar}^2
            + \bar{\tti}^2\frac{c_4 c_B}{2} K_1(\bar{\tti}) K_2(\bar{\tti})\norm{x - \xstar}^2
            \notag\\
            &\stackrel{(b)}{\leq}
            - \bar{\tti}\tilde{c}_3\norm{x - \xstar}^2,
        \end{align}
        where in $(a)$ we solve the two integrals, while in $(b)$ we use $\bar{\tti} \in (0,\pr]$, $\pr \in (0, \bar{\pr}_3)$, $\bar{\pr}_3 = \min\{\bar{\pr}_5, \bar{\pr}_6\}$, and the definition of $\bar{\pr}_5$ (cf.~\eqref{eq:bar_pr_3}).
        Hence, we have $W(x(\bar{\tti})) \leq W(x) \leq \gamma$, which contradicts $W(x(\bar{\tti})) = \gammam$, thus proving that $x(\tti) \in \Omega_W(\gammam) \subseteq \feas$ for all $\tti \in [0, \pr)$.
        The proof then follows by repeating the above steps in the entire interval $[0,\pr]$.

%
%
%
%
%
%
%
%
%
%
%
%
%
%
%
%
%
%
%
%
%
%
%
%
%
%

    %

    %
    %
    %
    %
    %
    %
    %
    %
    %
    %
    %
    %
    %
    %
    %
    %
    %
    %
    %
    %

    %
    %
    %
    %
    %
    %
    %
    %
    %
    %
    %
    %
    %
    %
    %
    %
    %
    %
    %

    %

    %
    %

    %

    %
    
    %
    %
    %
    %
    %
    %

    %
    
    %
    %
    %
    %
    %
    %
    %
    %
    %
    %
    
    %
    %
    %
    %
    %
    %
    %
    %
    %
    %
    %
    %
    %
    %
    %
    %
    %
    %
    %
    %
    %
    %
    %
    %
    %
    %
    %
    %
    %
    %
    %
    %
    %
    %
    %
    %
    %
    %
    %
    %
    %
    %
    %
    %
    %
    %
    %
    %
    %
    %
    %
    %
    %
    %
    %
    %
    %
    %
    %
    %
    %
    %
    %
    %
    %
    %
    %
    %
    %
    %
    %
    %
    %
    %
    %
     
    %
    %
    %
    %
    %
    %
    %
    %
    %
    %
    %
    %
    %
    %
    %
    %
    %
    %
    %
    %
    %
    %
    %
    %
    %
    %
    %
    %
    %
    %
    %
    %
    %
    %
    %
    %
    %
    %
    %
    %
    %
    %
    %
    %
    %
    %
    %
    %
    %
    %
    %
    %
    %
    %
    %
    %
    %
    %
    %
    %
    %
    %
    %
    %
    %
    %
    %
    %
    %
    %
    %
    %
    %
    %
    %
    %
    %
    %
    %
    %
    %
    %
    %
    %
    %
    %
    %
    %
    %
    %
    %
    %
    %
    %
    %
    %
    %
    %
    %
    %
    %
    %
    %
    %
    %
    %

    %
    %
    %

    %

    %
    %
    %
    %
    %
    %
    %
    %
    %
    %
    %
    %
    %
    %
    %
    %
    %
    %
    %
    %
    %
    %
    %
    %
    %
    %
    %
    %
    %
    %
    %
    %
    %
    %
    %
    %
    %
    %
    %
    %
    %
    %
    %
    %
    %
    %
    %
    %
    %

    %

    %
    %
    %
    %
    %
    %
    %
    %
    %
    %
    %
    %
    %
    %
    %
    %
    %
    %
    %
    %
    %
    %
    %
    %
    %
    %
    %
    %
    %
    %
    %
    %
    %
    %
    %
    %
    %
    %
    %
    %
    %
    %
    %
    %
    %
    %
    %
    %
    %
    %
    %
    %
    %
    %
    %
    %
    %
    %
    %
    %
    %
    %
    %
    %
    %
    %
    %
    %

%
%
%
%
%
%
%
%
%
%
%
%
%
%
%
%
%
%
%
%
%
%
%
%
%
%
%
%
%
%
%
%
%
%
%
%
%
%
%
%
%
%
%
%
%
%
%
%
%
%
%
%
%
%
%
%
%
%
%
%
%
%
%
%
%
%
%
%
%
%
%
%
%
%
%
%
%
%
%
%
%
%
%
%
%
%
%
%
%
%
%
%
%
%
%
%
%
%
%
%
%
%
%
%
%
%
%
%
%
%
%
%
%
%
%
%
%
%
%
%
%
%
%
%
%
%
%
%
%
%
%
%
%
%
%
%
%
%
%
%
%
%

%

%
%
%
%
%
%
%
%
%
%
%
%
%
%
%
%
%
%
%
%
%
%
%
%
%
%
%
%
%
%
%
%
%
%

%
%
%
%
%
%
%
%
%
%
%
%
%
%
%
%
%
%
%
%
%
%
%
%
%
%
%
%
%
%
%
%
%
%
%
%
%
%
%
%
%
%
%
%
%
%
%
%
%
%
%
%
%
%
%
%


%
%
%
%
%
%
%
%
%
%
%
%
%
%
%
%
%
%
%
%
%
%

\end{document}